\let\cal\mathcal
\def\Bscr{{\cal B}}
\def\Fscr{{\cal F}}
\def\Gscr{{\cal G}}
\def\Kscr{{\cal K}}
\def\Lscr{{\cal L}}
\def\Mscr{{\cal M}}
\def\Oscr{{\cal O}}
\def\Tscr{{\cal T}}
\def\MultiProj{\operatorname{(Multi)Proj}}
\def\Bimod{\operatorname{Bimod}}
\def\Gr{\operatorname{Gr}}
\def\QGr{\operatorname{QGr}}
\def\Qcoh{\operatorname{Qcoh}}
\def\Hom{\operatorname {Hom}}
\def\im{\operatorname {im}}
\def\coker{\operatorname {coker}}
\def\Ker{\operatorname {ker}}
\def\Pic{\operatorname {Pic}}
\DeclareMathOperator{\Proj}{Proj}
\DeclareMathOperator{\Tors}{Tors}
\DeclareMathOperator{\Aut}{Aut}
\def\Frac{\operatorname{Frac}}
\newtheorem{lemma}{Lemma}[section]
\newtheorem{proposition}[lemma]{Proposition}
\newtheorem{theorem}[lemma]{Theorem}
\newtheorem{corollary}[lemma]{Corollary}
\newtheorem*{theorem*}{Theorem}
\theoremstyle{definition}
\newtheorem{definition}[lemma]{Definition}
\theoremstyle{remark}
\newtheorem{remark}[lemma]{Remark}
\newtheorem{notation}[lemma]{Notation}
\newdimen\uboxsep \uboxsep=1ex
\def\uboxn#1{\vtop to 0pt{\hrule height 0pt depth 0pt\vskip\uboxsep
\hbox to 0pt{\hss #1\hss}\vss}}
\def\uboxs#1{\vbox to 0pt{\vss\hbox to 0pt{\hss #1\hss}
\vskip\uboxsep\hrule height 0pt depth 0pt}}
\let\oldmarginpar\marginpar
\def\marginpar#1{\oldmarginpar{\tiny\textcolor{red}{#1}}}
\author{Dennis Presotto}
\title{$\mathbb{Z}^2$-algebras as noncommutative blow-ups.}
\begin{document}
\begin{abstract}
The goal of this note is to first prove that for a well behaved $\mathbb{Z}^2$-algebra $R$, the category $\QGr(R):=\Gr(R)/\Tors(R)$ is equivalent to $\QGr(R_\Delta)$ where $R_\Delta$ is a \emph{diagonal-like} sub-$\mathbb{Z}$-algebra. Afterwards we use this result to prove that the $\mathbb{Z}^2$-algebras as introduced in \cite{Presotto} are $\QGr$-equivalent to a diagonal-like sub-$\mathbb{Z}$-algebra which is a simultaneous noncommutative blowup of a quadratic and a cubic Sklyanin algebra. As such we link the noncommutative birational transformation and the associated $\mathbb{Z}^2$-algebras in \cite{PresVdB} and \cite{Presotto} with the noncommutative blowups as in \cite{RSS2} and \cite{VdB19}.
\end{abstract}
\maketitle
\tableofcontents

\section{Introduction}
Throughout this note $R$ is a $\mathbb{Z}^2$-algebra over some algebraically closed field $k$. (See \S \ref{sec:diagonal-like} for the definition of $\mathbb{Z}^2$-algebras. We refer the interested reader to \cite{Sierra} for a more thorough introduction to $G$-algebras). Following tradition \cite{artinzhang} we associate a noncommutative projective scheme $\Proj(R)$ to a $\mathbb{Z}^2$-algebra $R$ whenever $R$ is sufficiently well behaved (e.g. a Noetherian $\mathbb{Z}^2$-algebra satisfying some analogue of generation in degree 1, see \Cref{def:gendegree1}). $\Proj(R)$ is defined via its category of ``quasicoherent sheaves'':
\[ \Qcoh( \Proj(R)) := \QGr(R) = \Gr(R)/ \Tors(R) \]
where $\Gr(R)$ is the category of graded right $R$-modules and $\Tors(R)$ is the full subcategory of torsion $R$-modules
. In this way these $\mathbb{Z}^2$-algebras give non-commutative generalizations of bi-homogeneous algebras.\\

Multi-homogeneous algebras (i.e. $\mathbb{Z}^n$-graded algebras with with $n>1$) appear frequently in the literature \cite{chan, CHTV, KSSW, SimisTV, trung}.
These multi-homogeneous algebras $S$ inherit many properties from diagonal subalgebras $S_\Delta$. Moreover to each multi-homogeneous algebra $S$ one associates a projective scheme $\MultiProj(S)$ and given suitable conditions on $S$ this projective scheme coincides with $\MultiProj(S_\Delta)$ (see for example \cite[Lemma 1.3]{trung}). For a bi-homogeneous algebra such a diagonal subalgebra is simply a $\mathbb{Z}$-graded algebra and as such $\MultiProj(S)$ is isomorphic to the $\Proj$ of a graded algebra. In \S  \ref{sec:diagonal-like} we generalize this result to the level of $\mathbb{Z}^2$-algebras and prove (\Cref{thm:equivalencediagonal}) that for sufficiently well behaved $\mathbb{Z}^2$-algebras $R$ the category $\QGr(R)$ is equivalent to $\QGr(R_\Delta)$ where $R_\Delta$ is a \emph{diagonal-like} sub-$\mathbb{Z}$-algebra. Using more abstract techniques, a similar result was obtained independently by Lowen, Ramos-Gonz\'alez and Shoikhet in \cite{LowenRS}.\\

Our main application of \Cref{thm:equivalencediagonal} is to the $\mathbb{Z}^2$-algebras appearing in \cite{Presotto}. These $\mathbb{Z}^2$-algebras were constructed when investigating the noncommutative versions of the standard birational transformation $\mathbb{P}^1 \times \mathbb{P}^1 \dashrightarrow \mathbb{P}^2$ as in \cite{PresVdB}. Such a noncommutative $\mathbb{P}^1 \times \mathbb{P}^1 \dashrightarrow \mathbb{P}^2$ is obtained from an inclusion of $\mathbb{Z}$-algebras
\[ \check{A'} \hookrightarrow \check{A}^{(2)} \]
where $A'$ a quadratic Sklyanin algebra, $A$ is a cubic Sklyanin algebra, $\check{A'}$ and $\check{A}$ are their associated $\mathbb{Z}$-algebras and $\check{A}^{(2)}$ is the second Veronese algebra of $\check{A}$ (i.e. $\check{A}^{(2)}_{i,j} = \check{A}_{2i,2j}:= A_{2j-2i}$).

Recall from \cite{ATV1} that quadratic and cubic Sklyanin algebras are classified using triples of geometric data $(Y,\Lscr,\sigma)$ where $Y$ is a smooth elliptic curve, $\Lscr$ is a line bundle on $Y$ and $\sigma \in \Aut(Y)$. It was shown in \cite{ATV2} that there is a 1-1-correspondence between points of $Y$ and point modules of $A(Y,\Lscr,\sigma)$. The inclusion $\check{A'} \hookrightarrow \check{A}^{(2)}$ is compatible with these geometric data in the sense that $Y = Y'$ and that the inclusion is constructed starting from some point $p \in Y$.

In \cite{Presotto} it was shown that there is an inclusion (constructed starting from two points $p', q'$)
\[ \check{A} \hookrightarrow \check{A'} \]
such that the composition $\check{A} \hookrightarrow \check{A}^{(2)}$ and $\check{A'} \hookrightarrow \check{A}^{\prime (2)}$ induce the identity on the associated function fields $\Frac(A)_0$ and $\Frac(A')_0$. This was done by investigating a $\mathbb{Z}^2$-algebra $\tilde{A}$ ``containing'' both $A$ and $A'$ as respectively a column and row. In \S \ref{sec:Zsquarenoeth} and \S \ref{sec:Btildenoeth} we check that $\tilde{A}$ satisfies the condition of \Cref{thm:equivalencediagonal}. As such $\QGr(\tilde{A}) \cong \QGr(\tilde{A}_\Delta)$ for each diagonal-like $\mathbb{Z}$-algebra $\tilde{A}_\Delta$.

Finally in \S \ref{sec:asblowups} we focus on a specific $\Delta$. For this $\Delta$ the methods in \cite{Presotto} provide us with inclusions 
\begin{equation} \label{eq:twoinclusions} \tilde{A}_\Delta \hookrightarrow \check{A}^{(4)} \textrm{ and } \tilde{A}_\Delta \hookrightarrow \check{A}^{\prime (3)} \end{equation}
We show that these inclusions are simultaneously compatible with the 1-periodicity of $\check{A}^{(4)}$ and $\check{A}^{\prime (3)}$. As such there is a graded algebra $T$ for which $\check{T} \cong \tilde{A}_\Delta$ and the inclusions in \eqref{eq:twoinclusions} give rise to inclusions $T \hookrightarrow A^{(4)}$ and $T \hookrightarrow A^{\prime (3)}$. Finally we check that these inclusions give $T$ the construction of a noncommutative blowup $A^{(4)}(p)$ and $A^{\prime(3)}(p'+q')$ as in \cite{RSS2}, resulting in our main result:

\begin{theorem} \label{thm:commonblowup}
Let $A$, $A'$, $p$, $p',q'$ and $\tilde{A}$ be as above, then $A$ and $A'$ contain a common blowup $A^{(4)}(p) \cong T \cong A^{\prime(3)}(p'+q')$ and there is an equivalence of categories
\[ \QGr(\tilde{A}) \cong \QGr(T) \]
\end{theorem}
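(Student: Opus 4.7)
The plan is to chain together the diagonal-reduction equivalence of \Cref{thm:equivalencediagonal}, the periodicity of the specific diagonal $\tilde{A}_\Delta$ chosen in \S\ref{sec:asblowups}, and the abstract characterization of noncommutative blowups from \cite{RSS2}. First I would invoke the Noetherianness and generation-in-degree-1 results of \S\ref{sec:Zsquarenoeth}--\S\ref{sec:Btildenoeth} to certify that $\tilde{A}$ satisfies the hypotheses of \Cref{thm:equivalencediagonal}, so that $\QGr(\tilde{A}) \cong \QGr(\tilde{A}_\Delta)$. I would then specialize to the $\Delta$ of \S\ref{sec:asblowups} and record the two inclusions $\tilde{A}_\Delta \hookrightarrow \check{A}^{(4)}$ and $\tilde{A}_\Delta \hookrightarrow \check{A}^{\prime(3)}$ of \eqref{eq:twoinclusions} supplied by the construction of \cite{Presotto}.

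Second, I would verify that both inclusions respect the natural $1$-periodicity of the target $\mathbb{Z}$-algebras. The $\mathbb{Z}$-algebras $\check{A}^{(4)}$ and $\check{A}^{\prime(3)}$ carry a canonical shift identifying the $(i,j)$-component with the $(i+1,j+1)$-component, because they are Veronese $\mathbb{Z}$-algebras of ordinary graded algebras; the task is to show that both shifts simultaneously preserve the image of $\tilde{A}_\Delta$. Once this is established, the standard equivalence between $1$-periodic $\mathbb{Z}$-algebras and $\mathbb{N}$-graded algebras yields a graded algebra $T$ with $\check{T} \cong \tilde{A}_\Delta$, and the inclusions \eqref{eq:twoinclusions} descend to inclusions of graded algebras $T \hookrightarrow A^{(4)}$ and $T \hookrightarrow A^{\prime(3)}$.

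Third, I would identify $T$ with the two blowups predicted by the theorem. For each inclusion separately, this means checking the conditions of the abstract characterization of \cite{RSS2}: that $T$ is an appropriately large subalgebra of $A^{(4)}$ (respectively $A^{\prime(3)}$), with Hilbert series matching that of $A^{(4)}(p)$ (respectively $A^{\prime(3)}(p'+q')$), and that $T$ cuts out precisely the point modules vanishing at $p$ (respectively at $p'$ and $q'$) via the geometric data $(Y,\Lscr,\sigma)$ attached to $A$ and $A'$ in \cite{ATV1,ATV2}. Once both blowup identifications are in place we obtain $A^{(4)}(p) \cong T \cong A^{\prime(3)}(p'+q')$, and the chain $\QGr(\tilde{A}) \cong \QGr(\tilde{A}_\Delta) = \QGr(\check{T}) \cong \QGr(T)$ delivers the equivalence of categories, using the standard identification $\QGr(\check{T}) \cong \QGr(T)$ for connected graded algebras.

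The main obstacle is the third step: matching the concretely constructed subalgebra $T$, produced as a diagonal of a bi-Veronese-like $\mathbb{Z}^2$-algebra, with the intrinsically defined blowups of \cite{RSS2}. This requires a careful Hilbert series computation for $T$ plus control of its point modules at the relevant geometric points, and moreover demands that the two independent descriptions of $T$ (one inside $A^{(4)}$, the other inside $A^{\prime(3)}$) be reconciled. The periodicity verification in step two is also delicate, since it requires the shift invariance to hold \emph{simultaneously} for both target algebras, which is what makes the common blowup phenomenon possible.
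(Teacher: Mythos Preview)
Your outline matches the paper through step two: apply \Cref{thm:Atildenoeth} to feed $\tilde{A}$ into \Cref{thm:equivalencediagonal}, specialise to $\Delta(i)=(2i,i)$, and verify $1$-periodicity so that $\tilde{A}_\Delta=\check{T}$ for a graded $T$ embedding in both $A^{(4)}$ and $A^{\prime(3)}$. Where you diverge is step three. You propose to identify $T$ with each blowup via an ``abstract characterization of \cite{RSS2}'' based on Hilbert series and point-module behaviour; but no such characterization theorem is invoked (or, as far as I can see, available) in \cite{RSS2}. The paper instead checks the \emph{definition} directly: in \cite{RSS2} the blowup $A^{(4)}(p)$ is by construction the subalgebra generated by those $x\in A_4$ whose image in $(A/gA)_4$ lies in $\Gamma(Y,\Lscr\sigma^*\Lscr\sigma^{*2}\Lscr\sigma^{*3}\Lscr(-p))$, and this is exactly $T_1=\tilde{A}_{(0,0),(2,1)}$ by the defining formula for $\tilde{A}$. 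On the $A^{\prime(3)}$ side the same definition-check requires computing the image of $T_1$ under the $\delta$-conjugation $(\cdot)\mapsto(\cdot)\delta^{2}$; passing to $\tilde{B}$ one identifies $\overline{\delta_{(2,1)}}\,\overline{\delta_{(1,2)}}$ as a section vanishing precisely at $p'+q'$ (beyond the $\tau^{-j}p$ already present), so $T_1$ lands in the degree-one piece defining $A^{\prime(3)}(p'+q')$. No Hilbert-series or point-module comparison is needed.

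You also understate where the work in step two lies. Periodicity on the $A^{(4)}$ side is a one-line consequence of $o_X(4)m_d=m_{\tau d}\,o_X(4)$. The substantive verification is that the $1$-periodicity of $T$ thus obtained agrees with the one induced from $A^{\prime(3)}$ through the $\delta$-conjugation. The paper handles this by splitting $\tilde{A}_{(2i,i),(2i+2,i+1)}$ via the short exact sequence $0\to k\,g_{(2i,i)}\to\tilde{A}_{(2i,i),(2i+2,i+1)}\to\tilde{B}_{(2i,i),(2i+2,i+1)}\to 0$ of \Cref{lem:BisAgA}: on the $\tilde{B}$-quotient one normalises the $\delta_{(i,j)}$ so that $\tau^*\overline{\delta_{(m,n)}}=\overline{\delta_{(m+2,n+1)}}$ and the check becomes a line-bundle computation; on the one-dimensional kernel one embeds into $\check{A}$ and uses centrality of $g\in A_4$. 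This decomposition is the mechanism that makes the two periodicities coincide, and it is not visible in your sketch.
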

\begin{remark}
With the exception of the equivalence of categories the above result was announced independently by Rogalski, Sierra and Stafford in \cite[Theorem 1.7]{RSS3}.
\end{remark}
\begin{remark}
 In \cite{PresVdB} one considers a noncommutative version of the Cremona transform $\mathbb{P}^2 \dashrightarrow \mathbb{P}^2$. Such a noncommutative Cremona transform is given by an inclusion $\check{A'} \hookrightarrow \check{A}^{(2)}$ where $A$ and $A'$ are quadratic Sklyanin algebras. This inclusion is constructed with respect to some divisor $d$. The methods in \cite{Presotto} provide an ``inverse'' inclusion $\check{A} \hookrightarrow \check{A'}^{(2)}$ with respect to some explicit divisor $d'$. Moreover analogously to op.cit one can construct a $\mathbb{Z}^2$-algebra $\tilde{A}$ containing both $A$ and $A'$. It is a straightforward check that the results in \S \ref{sec:Zsquarenoeth} and \S \ref{sec:Btildenoeth} are also applicable to these $\mathbb{Z}^2$-algebras. If one choses $\Delta(i)=(i,i)$ computations analogous to the ones in \S \ref{sec:asblowups} show that $A$ and $A'$ contain a common blowup $A^{(3)}(d) \cong T \cong A^{\prime (3)}(d')$ and there is an equivalence of categories
\[ \QGr(\tilde{A}) \cong \QGr(T) \]
In order to avoid unnecessary difficult notations, we have chosen to only focus on the proof of \Cref{thm:commonblowup}.
\end{remark}

\section{Acknowledgements}
The author wishes to thank Shinnosuke Okawa for mentioning the idea that geometrically (at the level of $\QGr$) the $\mathbb{Z}^2$-algebras as in \cite{Presotto} are the same as the blowups in \cite{RSS2}. The author wishes to thank Wendy Lowen and Julia Ramos Gonz\'alez for useful discussions about the more abstract nature of the equivalence of categories obtained in \Cref{thm:equivalencediagonal}.
The author further wishes to thank Michel Van den Bergh for providing insights in the most technical aspects of this paper as well as for reading through earlier versions.

\section{Diagonal-like subalgebras}
\label{sec:diagonal-like}

Throughout this section $R$ is a $\mathbb{Z}^2$-algebra over some field $k$. I.e. $R$ is a $k$-algebra together with a decomposition
\[ R = \bigoplus_{(i,j),(m,n) \in \mathbb{Z}^2} R_{(i,j),(m,n)} \]
such that addition is degree-wise and multiplication satisfies 
\[ R_{(a,b),(i,j)} R_{(i,j),(m,n)} \subset R_{(a,b),(m,n)} \textrm{ and } R_{(a,b),(c,d)}R_{(i,j),(m,n)} =0 \textrm{ whenever } (c,d) \neq (i,j)\]
 Moreover there are local units $e_{(i,j)} \in R_{(i,j),(i,j)}$ such that for each $x \in R_{(a,b),(m,n)}:$ 
\[ e_{(a,b)}x = x =x e_{(m,n)}\]
 A graded $R$-module is an $R$-module $M$ together with a decomposition 
\[ M = \bigoplus_{(i,j) \in \mathbb{Z}^2} M_{(i,j)} \]
such that the $R$-action on $M$ satisfies 
\[ M_{(i,j)} R_{(i,j),(m,n)} \subset M_{(m,n)} \textrm{ and } M_{(i,j)}R_{(a,b),(m,n)} = 0 \textrm{ if } (a,b) \neq (i,j) \]
We denote $\Gr(M)$ for the category of graded $R$-modules. In this section we also make the assumption that $R$ is Noetherian in the sense that $\Gr(R)$ is locally Noetherian, moreover we assume that each $R_{(i,j),(m,n)}$ is a finite dimensional vectorspace and $R_{(i,j),(i,j)} = k$.
\begin{notation} \label{not:firstquadrant}
Let $R$ be a $\mathbb{Z}^2$-algebra. Then we denote $R_{+}$ for the $\mathbb{Z}^2$-subalgebra:
\[ \left( R_{+} \right) _{(i,j),(m,n)} = \begin{cases} R_{(i,j),(m,n)} & \textrm{ if $i \leq m$ and $j \leq n$} \\ 0 & \textrm{else} \end{cases} \]
\end{notation}

\begin{definition} \label{def:gendegree1}
Let $R$ be a $\mathbb{Z}^2$-algebra and let $R_{+}$ be as above. We say that $R_{+}$ is \emph{generated in degree $(0,1)$ and $(1,0)$} if each homogeneous element in $R_+$ can be written as a product of elements of degree $(0,1)$ or $(1,0)$. I.e.:
\begin{eqnarray*}
& \forall i,j,m,n \in \mathbb{Z}, i< m, j \leq n:  R_{(i,j),(i+1,j)} \otimes R_{(i+1,j),(m,n)} \rightarrow R_{(i,j),(m,n)} \textrm{ is surjective} & \\
& \textrm{ and } & \\
& \forall i,j,m,n \in \mathbb{Z}, i \leq m, j < n: R_{(i,j),(i,j+1)} \otimes R_{(i,j+1),(m,n)} \rightarrow R_{(i,j),(m,n)}\textrm{ is surjective} &
\end{eqnarray*}
\end{definition}

\begin{definition} \label{def:Z2torsion}
Let $R$ be a $\mathbb{Z}^2$-algebra such that $R_{+}$ is generated in degree $(0,1)$ and $(1,0)$ and let $M$ be a graded $R$-module. We say $M$ is \emph{right-upper-bounded} if there exist $i_0, j_0 \in \mathbb{Z}: \forall i \geq i_0, j \geq j_0: M_{(i,j)} = 0$. $M$ is said to be \emph{torsion} if it is a direct limit of right-upper-bounded modules. We denote $\Tors(R)$ for the full subcategory of torsion modules in $\Gr(R)$.
\end{definition}
The assumption that $R$ is Noetherian implies that $\Tors(R)$ is a Serre subcategory of $\Gr(R)$ and as such we can define a quotient category
\[ \QGr(R) := \Gr(R) / \Tors(R) \]
The main result of this section is that we can understand $\QGr(R)$ in terms of \emph{diagonal-like} sub-$\mathbb{Z}$-algebras of $R$.
\begin{definition}
Let $\Delta: \mathbb{Z} \rightarrow \mathbb{Z}^2$ and let $R$ be a $\mathbb{Z}^2$-algebra, then we denote $R_\Delta$ the induced $\mathbb{Z}$-subalgebra:
\[ \left( R_\Delta \right)_{i,j} := R_{\Delta(i), \Delta(j)} \]
We say $\Delta$ (or $R_\Delta$) is \emph{diagonal-like} if for all $(a,b) \in \mathbb{Z}^2$ there is an $i \in \mathbb{Z}$ such that $(a,b) \leq \Delta(i)$ with $\leq$ the product order on $\mathbb{Z}^2$.
\end{definition}

\begin{theorem} \label{thm:equivalencediagonal}
Let $R$ be a $\mathbb{Z}^2$-algebra and let $R_\Delta$ be a diagonal-like sub-$\mathbb{Z}$-algebra. Then there is an equivalence of categories
\begin{equation}
\label{eq:equivalentQGr}
\QGr(R) \cong \QGr(R_\Delta)
\end{equation}
\end{theorem}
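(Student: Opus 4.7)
The plan is to set up an adjunction $(\Ind,\Res)$ between $\Gr(R_\Delta)$ and $\Gr(R)$ in which the right adjoint $\Res$ is the obvious graded restriction, to show both functors respect the torsion subcategories, and finally to check that the unit and counit become isomorphisms after passing to $\QGr$, yielding the equivalence.

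First I would define $\Res : \Gr(R)\to \Gr(R_\Delta)$ by $(\Res M)_i := M_{\Delta(i)}$ with the obvious $R_\Delta$-action. It is exact, preserves filtered colimits, and hence admits a left adjoint $\Ind$, identified via the Kan extension formula with tensoring against the $R_\Delta$-$R$-bimodule obtained from $R$ by restricting on the left to the objects $\Delta(i)$. Next I would verify that $\Res$ sends $\Tors(R)$ into $\Tors(R_\Delta)$: it suffices to treat a right-upper-bounded $M$ with $M_{(a,b)}=0$ whenever $(a,b)\geq (i_0,j_0)$, and then the diagonal-likeness of $\Delta$ (together with the standing convention that $\Delta$ is componentwise eventually increasing, which is what makes $R_\Delta$ positively generated) gives an $i_1$ with $\Delta(i)\geq (i_0,j_0)$ for all $i\geq i_1$, so $\Res M$ vanishes in degrees $\geq i_1$. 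Thus $\Res$ descends to $\bar\Res : \QGr(R)\to \QGr(R_\Delta)$.

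The core of the argument is to show that $\Ind$ also descends to a functor $\bar\Ind$, and that $(\bar\Ind,\bar\Res)$ becomes an equivalence on quotient categories. For this I would prove:
\begin{enumerate}
\item[(i)] for every $N\in \Gr(R_\Delta)$ the unit $\eta_N : N \to \Res\,\Ind\,N$ has kernel and cokernel in $\Tors(R_\Delta)$;
\item[(ii)] for every $M\in \Gr(R)$ the counit $\varepsilon_M : \Ind\,\Res\,M \to M$ has kernel and cokernel in $\Tors(R)$.
\end{enumerate}
Since both $\Res$ and $\Ind$ commute with colimits and $R$ is Noetherian, by dévissage it is enough to check (i) and (ii) on the finitely generated projective generators $e_{(a,b)}R$ of $\Gr(R)$ and on $e_iR_\Delta$ of $\Gr(R_\Delta)$. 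For a fixed bidegree $(a,b)$, diagonal-likeness picks an $i$ with $\Delta(i)\geq (a,b)$; the degree-$(0,1)$ and degree-$(1,0)$ generation of $R_+$ then lets one factor any element of $R_{(a,b),(m,n)}$ with $(m,n)\geq \Delta(i)$ through $R_{(a,b),\Delta(i)}\cdot R_{\Delta(i),(m,n)}$, so the counit is surjective in all such degrees and an isomorphism in all degrees $(m,n)\geq \Delta(j)$ for sufficiently large $j$ — i.e.\ an isomorphism outside a right-upper-bounded set, hence an isomorphism in $\QGr(R)$.

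The main obstacle will be (ii): producing the factorisation through $R_\Delta$ and arguing it is natural and compatible with the adjunction requires a careful interaction between the degree-$(0,1)/(1,0)$ generation of $R_+$ (which only yields a chain of length-one steps inside $R$) and the diagonal-likeness of $\Delta$ (which provides cofinal but possibly sparse landing points $\Delta(i)$). Once this is handled, statement (i) follows by the analogous but easier argument on $R_\Delta$-generators, and the standard formal calculus of adjunctions gives that $\bar\Res$ and $\bar\Ind$ are mutually quasi-inverse, establishing the equivalence \eqref{eq:equivalentQGr}.
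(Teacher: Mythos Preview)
Your overall strategy --- set up the $(\Ind,\Res)$ adjunction, show both functors preserve torsion, and verify that the unit and counit become isomorphisms in $\QGr$ --- is exactly the paper's approach (the paper's $F$ is your $\Res$ and its $G$ is your $\Ind$). The place where your argument diverges, and where it has a genuine gap, is step~(ii).

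You correctly argue that for $(m,n)\geq\Delta(i)\geq(a,b)$ the generation hypothesis makes $\varepsilon_{e_{(a,b)}R}$ \emph{surjective} in degree $(m,n)$. But you then assert that $\varepsilon_{e_{(a,b)}R}$ is an \emph{isomorphism} in all sufficiently high degrees, and nothing in your sketch controls the kernel. The source $\bigl(\Ind\Res(e_{(a,b)}R)\bigr)_{(m,n)}$ is a quotient of $\bigoplus_i R_{(a,b),\Delta(i)}\otimes_k R_{\Delta(i),(m,n)}$, and there is no evident reason the multiplication map to $R_{(a,b),(m,n)}$ should be injective: the factorisation through $\Delta(i)$ that gives surjectivity is highly non-unique. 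As written, (ii) is incomplete, and you yourself flag this as ``the main obstacle'' without resolving it.

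The paper sidesteps this entirely. Its key observation (Lemma~\ref{lem:smallergenset}) is that the \emph{smaller} collection $\{e_{\Delta(i)}R\}$ already generates $\QGr(R)$: diagonal-likeness together with generation in degrees $(1,0)$ and $(0,1)$ yields, for each $(a,b)$, a map $e_{\Delta(i)}R^{\oplus N}\to e_{(a,b)}R$ with torsion cokernel. Once this is known, your d\'evissage only needs (ii) on the generators $e_{\Delta(i)}R$, and there $\Ind\Res(e_{\Delta(i)}R)=\Ind(e_iR_\Delta)=e_{\Delta(i)}R$ with $\varepsilon$ equal to the identity --- so no injectivity question ever arises. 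In fact your surjectivity argument for $\varepsilon_{e_{(a,b)}R}$ is essentially the content of Lemma~\ref{lem:smallergenset}; what you are missing is the realisation that this surjectivity, reinterpreted as a statement about generators of $\QGr(R)$, already finishes the proof.

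An alternative patch, closer to your outline: observe that the unit is an honest isomorphism (since $\Res\Ind(e_iR_\Delta)=e_iR_\Delta$ and $\Res\Ind$ is right exact), so the triangle identity forces $\Res(\varepsilon_M)$ to be an isomorphism for every $M$. Hence $\ker\varepsilon_M$ and $\coker\varepsilon_M$ vanish under $\Res$, and a short argument using generation and diagonal-likeness shows that any finitely generated $N$ with $\Res N=0$ is right-upper-bounded, giving (ii).
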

\begin{proof}
There is a restriction functor $F: \Gr(R) \rightarrow \Gr(R_\Delta)$ defined by $F(M)_i := M_{\Delta(i)}$. $F$ obviously maps torsion modules to torsion modules and hence induces a functor $F: \QGr(R) \rightarrow \QGr(R_\Delta)$. 

Next we define a right exact functor $G: \Gr(R_\Delta) \rightarrow \Gr(R)$ by setting $G(e_i R_\Delta)~=~e_{\Delta(i)}R$ at the level of objects. As $\Hom_{R_\Delta}(e_i R_\Delta, e_j R_\Delta)$ and $\Hom_R(e_{\Delta(i)}R,e_{\Delta(j)}R)$ are both canonically isomorphic to $R_{\Delta(j),\Delta(i)}$, $G: \Hom_{R_\Delta}(e_i R_\Delta, e_j R_\Delta) \rightarrow \Hom_R(e_{\Delta(i)}R,e_{\Delta(j)}R)$ is chosen to be the identity at the level of  homomorphisms. We now claim that $G$ sends torsion modules to torsion modules. As $G$ is right exact and commutes with direct sums, it is compatible with direct limits. As such, it suffices to check that $G$ sends finitely generated, right bounded $R_\Delta$-modules to right-upper-bounded $R$-modules. For this let $M$ be a finitely generated, right bounded $R_\Delta$-module. There is a resolution
\[ \bigoplus_m e_{i_m} R_\Delta \overset{f}{\rightarrow} \bigoplus_{n=0}^{n_0} e_{j_n} R_\Delta \rightarrow M \rightarrow 0 \]
and there is a $u_0 \in \mathbb{Z}$ such that $f$ is surjective in all degrees $u$ with $u \geq u_0$. Moreover we can assume $u_0 \geq j_n$ for all $n$. Now write $\Delta(u_0)=(a_0,b_0)$. The fact that $R_+$ is assumed to be generated in degree $(0,1)$ and $(1,0)$ implies that the induced map
\[ G(f): \bigoplus_m e_{\Delta(i_m)} R \rightarrow \bigoplus_{n=0}^{n_0} e_{\Delta(j_n)} R \]
is surjective in all degrees $(a,b)$ with $a \geq a_0, b \geq b_0$. This implies $G(M)_{(a,b)}=0$ for all such $a,b$, hence $G(M)$ is right-upper-bounded. In particular the functor $G: \Gr(R_\Delta) \rightarrow \Gr(R)$ induces a functor $G: \QGr(R_\Delta) \rightarrow \QGr(R)$.

It is immediate that $G \circ F = Id$. By \Cref{lem:smallergenset} below it now suffices to check that $F(G(e_{\Delta(i)}R))=e_{\Delta(i)}(R)$. This is however obvious.

\end{proof}
\begin{lemma} \label{lem:smallergenset}
The collection $\{ e_{\Delta(i)}R \mid i \in \mathbb{Z} \}$ (or rather the set of corresponding objects in $\QGr(R)$) forms a set of generators for $\QGr(R)$.
\end{lemma}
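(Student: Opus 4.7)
The plan is to leverage the standard fact that the family $\{e_{(a,b)}R : (a,b) \in \mathbb{Z}^2\}$ is a set of projective generators for $\Gr(R)$ (every $M \in \Gr(R)$ admits the canonical surjection $\bigoplus_{(a,b)} e_{(a,b)}R \otimes_k M_{(a,b)} \twoheadrightarrow M$), and therefore its image in the localisation $\QGr(R)$ remains a generating set. Consequently it suffices to show that every $e_{(a,b)}R$, viewed in $\QGr(R)$, is a quotient of a direct sum of modules of the form $e_{\Delta(i)}R$.

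To construct such a quotient, given $(a,b) \in \mathbb{Z}^2$ I invoke the diagonal-like hypothesis to choose $i \in \mathbb{Z}$ with $(a,b) \leq \Delta(i) =: (c,d)$, and I consider the map of graded right $R$-modules
\[ \mu: R_{(a,b),(c,d)} \otimes_k e_{(c,d)}R \longrightarrow e_{(a,b)}R \]
given by left multiplication on the second factor. Since $R_{(a,b),(c,d)}$ is finite-dimensional by the standing hypotheses, the source is a finite direct sum of copies of $e_{\Delta(i)}R$, so the only thing to verify is that $\coker(\mu)$ is torsion.

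On the degree-$(m,n)$ component, $\mu$ is the multiplication $R_{(a,b),(c,d)} \otimes R_{(c,d),(m,n)} \to R_{(a,b),(m,n)}$. When $(m,n) \geq (c,d)$, iterating the generation hypothesis of \Cref{def:gendegree1} shows that every element of $R_{(a,b),(m,n)}$ is a product of $(1,0)$- and $(0,1)$-letters, and such a word can always be routed so as to visit the intermediate vertex $(c,d)$ (e.g.\ do all the rightward steps from $(a,b)$ to $(c,b)$, then upward steps to $(c,d)$, before continuing). This forces the multiplication to be surjective, hence $\coker(\mu)_{(m,n)} = 0$ whenever $m \geq c$ and $n \geq d$. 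This is precisely the right-upper-boundedness condition, so $\coker(\mu)$ is torsion in the sense of \Cref{def:Z2torsion}, and $\mu$ descends to an epimorphism in $\QGr(R)$, as needed. I do not foresee any real obstacle: both the diagonal-like and generation hypotheses are used at exactly the place they were introduced for, and the only mild subtlety is the routing of the factorisation through $(c,d)$, which is immediate from the iterated surjectivity in \Cref{def:gendegree1}.
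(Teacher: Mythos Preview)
Your proof is correct and follows essentially the same approach as the paper: reduce to covering each $e_{(a,b)}R$ in $\QGr(R)$ by a finite sum of $e_{\Delta(i)}R$'s via a multiplication map whose cokernel is right-upper-bounded, using generation in degree $(1,0)$ and $(0,1)$. The only cosmetic difference is that the paper establishes one-step surjections $e_{(m+1,n)}R^{\oplus N}\twoheadrightarrow e_{(m,n)}R$ and $e_{(m,n+1)}R^{\oplus N'}\twoheadrightarrow e_{(m,n)}R$ in $\QGr(R)$ and then iterates, whereas you go straight to $(c,d)=\Delta(i)$ in a single multiplication map; your ``routing through $(c,d)$'' argument is exactly the composite of those iterated one-step surjections.
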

\begin{proof}
As the collection $\{ e_{(m,n)}R \mid m,n \in \mathbb{Z} \}$ forms a set of generators for $\Gr(R)$ and hence also for $\QGr(R)$ it suffices to show that every $e_{(m,n)}R$ is a quotient of a direct sum of objects $e_{\Delta(i)}R$ in $\QGr(R)$. For this fix some $m,n \in \mathbb{Z}$. We claim that there are surjective maps (in $\QGr(R)$)
\begin{eqnarray}
\label{eq:firstmap} &  e_{(m+1,n)}R^{\oplus N} \rightarrow e_{(m,n)}R & \\
\notag & \textrm{and} & \\
\label{eq:secondmap} &  e_{(m,n+1)}R^{\oplus N'} \rightarrow e_{(m,n)}R & 
\end{eqnarray}
Assume these claims. As we $\Delta$ was assumed to be diagonal-like, there exist integers $a,b,i \in \mathbb{Z}$ such that $a,b \geq 0$ and $\Delta(i) = (m+a,n+b)$. In particular the surjective maps \eqref{eq:firstmap} and \eqref{eq:secondmap} give rise to a surjective map $e_{\Delta(i)} R^{\oplus N''} \rightarrow e_{(m,n)}R$. Hence the lemma follows from the claims. 

We now prove the claims. As both claims are similar we only prove \eqref{eq:firstmap}. For this let $N = \dim_k(\Hom(e_{m+1,n}R,e_{m,n}R)) = \dim_k ( R_{(m,n),(m+1,n)})$. As we assumed $R$ to be positively generated in degree (1,0) and (0,1) there is a map
\[  e_{(m+1,n)}R^{\oplus N} \rightarrow e_{(m,n)}R \]
in $\Gr(R)$ whose cokernel lives in degrees $(x,y)$ with either $x \leq m$ or $y < n$. As such this cokernel is torsion and the induced map in $\QGr(R)$ is surjective.
\end{proof}


\section{Summary of the constructions in \cite{PresVdB} and \cite{Presotto}}
\label{sec:summarycon}
Throughout the following sections $k$ is assumed to be an algebraically closed field of characteristic zero.\\
In this section we briefly recall the constructions in \cite{PresVdB} and \cite{Presotto} necessary to understand the proof of \Cref{thm:commonblowup}. We refer to these papers for all unexplained notations. Three dimensional Artin-Schelter regular algebras generated in degree 1 provide important examples of noncommutative surfaces. They were defined in \cite{artinschelter} and it was shown that they are either generated by 3 elements satisfying 3 relations of degree 2 (the quadratic case) or 2 generators satisfying 2 relations of degree 3 (the cubic case). For use below we define $(r,s)$ to be respectively the number of generators and the degrees of the relations. Thus $(r,s) = (3,2)$ or $(2,3)$ depending on whether the algebra is quadratic or cubic.\\

Three dimensional Artin-Schelter regular algebras were classified in \cite{ATV1} in terms of geometric triples $(Y,\Lscr,\sigma)$. Our main focus lies on quadratic and cubic ``Sklyanin algebras'' of infinite order. In this case $Y$ is a smooth elliptic curve, $\Lscr$ is a line bundle of degree $s$ and $\sigma: Y \rightarrow Y$ is an infinite order morphism given by a translation. It is customary to write $\tau := \sigma^{s+1}$. Moreover for each Sklyanin algebra $A = A(Y,\Lscr,\sigma)$ there exists a central element $g \in A_{s+1}$ such that $A/gA \cong B(Y,\Lscr,\sigma)$ where $B(Y,\Lscr,\sigma)$ is the twisted homogeneous coordinate ring (\cite{AV}) with respect to $(Y,\Lscr,\sigma)$. \\

In \cite{PresVdB} a noncommutative version of the classical birational transformation $\mathbb{P}^1 \times \mathbb{P}^1 \dashrightarrow \mathbb{P}^2$ was constructed as an inclusion 
\begin{equation}
\label{eq:NCbirational1} \gamma: \check{A'} \hookrightarrow \check{A}^{(2)} 
\end{equation}
where $A=A(Y,\Lscr,\sigma)$ is a cubic Sklyanin algebra, $A'$ a quadratic Sklyanin algebra and $\check{A}$ and $\check{A'}$ are their associated $\mathbb{Z}$-algebras. (i.e. $\check{A}_{i,j} := A_{j-i}$ and multiplication in $\check{A}$ is defined in the obvious way). The inclusion in \eqref{eq:NCbirational1} is constructed starting from a point $p \in Y$. More concretely: one defines $X = \QGr(A)$ and introduces a category of bimodules $\Bimod(X-X)$. Of particular importance are the bimodules $o_X(1)$ (corresponding to degree shifting in $A$) and $m_p$ (the ideal ``sheaf'' of the point $p$). We refer the interested reader to \cite[Chapter 3]{VdB19} for a thorough introduction to bimodules. The inclusion \eqref{eq:NCbirational1} is then obtained by the following identifications
\begin{eqnarray}
\label{eq:trivial} \check{A} &  \cong & \bigoplus_{m,n \in \mathbb{Z}}\Hom_X(\Oscr_X(-n), \Oscr_X(-m)) \\
\label{eq:nontrivial} \check{A'} & \cong & \bigoplus_{m,n \in \mathbb{Z}}\Hom_X(\Oscr_X(-2n), \Oscr_X(-2m) \otimes m_{\tau^{-m}p} \ldots m_{\tau^{-n+1}p} )
\end{eqnarray}
The identification in \eqref{eq:nontrivial} is obtained by noticing that the right hand side is generated in degree 1, has the correct Hilbert series and has a quotient isomorphic to ${\check{B}(Y,\Lscr\otimes\sigma^*\Lscr(-p),\psi)}$ where $\psi: Y \rightarrow Y$ is some automorphism such that $\psi^3 = \sigma^4$.\\

In \cite{Presotto} it was shown that there is an inclusion 
\begin{equation} \label{eq:NCbirational2} \delta: \check{A} \hookrightarrow \check{A'} \end{equation}
such that the composition $\gamma \circ \delta: \check{A} \hookrightarrow \check{A}^{(2)}$ and $\delta \circ \gamma: \check{A'} \hookrightarrow \check{A}^{\prime (2)}$ induce the identity on the associated function fields. This was done by investigating the following $\mathbb{Z}^2$-algebra $\tilde{A}$:
\[
\tilde{A}_{(i,j),(m,n)}:=
\begin{cases}
\Hom_X(\Oscr_X(-m-2n),\Oscr_X(-i-2j)m_{\tau^{-j}p}\ldots m_{\tau^{-n+1}p}&\text{if $n> j$}\\
\Hom_X(\Oscr_X(-m-2n),\Oscr_X(-i-2j))&\text{if $n \leq j$}
\end{cases}
\]
The construction of the inclusion \eqref{eq:NCbirational2} is then based upon the following observations:
\begin{enumerate}[i)]
\item $\tilde{A}_{(i,0),(m,0)} \cong A_{i,m}$
\item $\tilde{A}_{(0,j),(0,n)} \cong A'_{j,n}$
\item $\tilde{A}_{(i,j),(m,n)} \subset A_{(i+2j,m+2n)}$, hence $\tilde{A}$ contains no non-trivial zero-divisors (because $A$ is a domain).
\item $\dim_k \left( \tilde{A}_{(i,j),(i+1,j-1)} \right)=1$
\end{enumerate}
We let $\delta_{(i,j)}$ be a nonzero element in $\tilde{A}_{(i,j),(i+1,j-1)}$. Then \eqref{eq:NCbirational2} is of the form
\begin{equation} \label{eq:NCbirational4} \tilde{A}_{(i,0),(m,0)} \rightarrow \delta_{(1,i-1)}^{-1} \ldots \delta_{(i,0)}^{-1} \tilde{A}_{(i,0),(m,0)} \delta_{(m,0)} \ldots \delta_{(1,m-1)} \subset \tilde{A}_{(0,i),(0,m)} \end{equation}
The fact that $\gamma \circ \delta$ and $\delta \circ \gamma$ induce the identity on the function fields of $A$ and $A'$ is based upon the following observations:
One can fix nonzero elements $\gamma_{(i,j)}$ in the 1-dimensional vectorspace $\tilde{A}_{(i,j),(i+2,j-1)}$ such that \eqref{eq:NCbirational1} is actually of the form
\begin{equation} \label{eq:NCbirational3} \tilde{A}_{(0,i),(0,m)} \rightarrow \gamma_{(2i-2,1)}^{-1} \ldots \gamma_{(0,i)}^{-1} \tilde{A}_{(0,i),(0,m)} \gamma_{(0,m)} \ldots \gamma_{(2m-2,1)} \subset \tilde{A}_{(2i,0),(2m,0)} \end{equation}
Conversely it was shown that
\begin{equation} 
\label{eq:NCbirational5} \check{A} \cong \bigoplus_{(i,j)} \Hom_{X'}(\Oscr_X'(-2j),\Oscr_{X'}(-2i)\otimes m_{d_i} \ldots m_{d_j})
\end{equation}
where $X' = \QGr(A')$ and
\begin{eqnarray} \label{definitiondi}
d_i = \begin{cases} 
\tau^{-j}p' & \text{if $i=2j$} \\
\tau^{-j}q'& \text{if $i=2j+1$} \\
\end{cases}
\end{eqnarray}
where $p', q' \in Y$ are defined by the following relations in $\Pic(Y)$:
\begin{eqnarray}
\notag p + \tau q' & \sim & [\Lscr] \\
\label{eq:defp'q'} p +  p' & \sim & [\sigma^* \Lscr]
\end{eqnarray}
Moreover the obvious inclusion (induced by \eqref{eq:NCbirational5})
\[ \check{A} \cong \bigoplus_{(i,j)} \Hom_{X'}(\Oscr_X'(-2j),\Oscr_{X'}(-2i)\otimes m_{d_i} \ldots m_{d_j}) \subset \bigoplus_{(i,j)} \Hom_{X'}(\Oscr_X'(-2j),\Oscr_{X'}(-2i)) \cong \check{A'}^{(2)} \]
coincides with \eqref{eq:NCbirational4}. These facts are combined in \cite{Presotto} to conclude that $\gamma \circ \delta$ and $\delta \circ \gamma$ induce the identity on the function fields of $A$ and $A'$.

Finally for further use in this paper we recall from \cite{Presotto} that the $\mathbb{Z}^2$-algebra $\tilde{B}$ was defined as
\[
\tilde{B}_{(i,j),(m,n)}:=
\begin{cases}
\Gamma(Y,\sigma^{*(i+2j)}\Lscr \sigma^{*(i+2j+1)}\Lscr \ldots \sigma^{*(m+2n-1)}\Lscr(-\tau^{-j}p - \ldots - \tau^{-n+1}p ))&\text{if $n> j$}\\
\Gamma(Y,\sigma^{*(i+2j)}\Lscr \sigma^{*(i+2j+1)}\Lscr \ldots \sigma^{*(m+2n-1)}\Lscr)&\text{if $n\leq j$}
\end{cases}
\]
and there is a morphism $\tilde{A} \rightarrow \tilde{B}$ which is an epimorphism in the first quadrant (\cite[Lemma 8.9]{Presotto})

\section{The $\mathbb{Z}^2$-algebras as in \cite{Presotto} are Noetherian.}
\label{sec:Zsquarenoeth}
In this section we show that the $\mathbb{Z}^2$-algebras $\tilde{A}$ as introduced above are Noetherian (in the sense that $\Gr(\tilde{A})$ is locally Noetherian). 
In order to prove that $\tilde{A}$ is Noetherian we work through $\tilde{B}$. By construction there is a surjective $\mathbb{Z}^2$-algebra morfism $\tilde{A}_+ \rightarrow \tilde{B}_+$ (recall \Cref{not:firstquadrant}). We first check that in sufficiently high degrees this map is given by killing a certain collection of ``normalizing'' elements $\{g_{(i,j)} \}$. I.e. we prove the following
\begin{lemma} With the notations as above:
\label{lem:BisAgA}
\begin{enumerate}
\item For all $i,j \in \mathbb{Z}$ the map $\tilde{A}_{(i,j),(i+2,j+1)} \rightarrow \tilde{B}_{(i,j),(i+2,j+1)}$ has a one dimensional kernel. Let $g_{(i,j)}$ be a nonzero element in this kernel.
\item For all $a \geq 2, b \geq 1$ the kernel of $\tilde{A}_{(i,j),(i+a,j+b)} \rightarrow \tilde{B}_{(i,j),(i+a,j+b)}$ is given by $g_{(i,j)}\tilde{A}_{(i+2,j+1),(i+a,j+b)} = \tilde{A}_{(i,j),(i+a-2,j+b-1)}g_{(i+a-2,j+b-1)}$.
\end{enumerate}
\end{lemma}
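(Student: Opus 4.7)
The plan is to view everything inside the ambient cubic Sklyanin algebra $A$, exploiting the short exact sequence
\[
0 \to \Oscr_X(-4) \xrightarrow{\cdot g} \Oscr_X \to \Oscr_Y \to 0
\]
in $X=\QGr(A)$ that encodes the identification $A/gA \cong B(Y,\Lscr,\sigma)$, together with the invertibility of the ideal bimodules $m_q$. By observation (iii), $\tilde A_{(i,j),(m,n)}$ embeds in $A_{m+2n-i-2j}$, and on the ambient algebras the surjection $\tilde A_+\twoheadrightarrow\tilde B_+$ is the restriction of $A\twoheadrightarrow A/gA = B$. Hence the kernel of $\tilde A_{(i,j),(m,n)}\to \tilde B_{(i,j),(m,n)}$ equals $\tilde A_{(i,j),(m,n)}\cap g\cdot A_{m+2n-i-2j-4}$.

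For part (1), with $(m,n)=(i+2,j+1)$ the intersection lies in $g\cdot A_0 = kg$ and is therefore at most one-dimensional. Since $g$ kills every point module of $A$ (point modules factor through $A/gA = B$), the morphism $\Oscr_X(-i-2j-4)\xrightarrow{g}\Oscr_X(-i-2j)$ factors through $\Oscr_X(-i-2j)m_{\tau^{-j}p}$, i.e.\ $g$ does lie in $\tilde A_{(i,j),(i+2,j+1)}$. The kernel is therefore exactly $kg$, and we set $g_{(i,j)}:=g$ under this identification. The identity $g_{(i,j)}\tilde A_{(i+2,j+1),(i+a,j+b)}=\tilde A_{(i,j),(i+a-2,j+b-1)}g_{(i+a-2,j+b-1)}$ in part (2) then follows from centrality of $g$ in $A$, and both subspaces are visibly contained in the kernel.

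For the reverse inclusion of part (2), write $N:=m_{\tau^{-j-1}p}\cdots m_{\tau^{-j-b+1}p}$. The factorization of $g$ above refines the displayed sequence to a short exact sequence $0\to\Oscr_X(-4)\xrightarrow{g}\Oscr_X m_{\tau^{-j}p}\to \Oscr_Y(-\tau^{-j}p)\to 0$ in $X$ (the cokernel computed via the snake lemma against $0\to \Oscr_X m_{\tau^{-j}p}\to \Oscr_X\to \Oscr_{\tau^{-j}p}\to 0$). Right-tensoring by the invertible bimodule $\Oscr_X(-i-2j)N$ preserves exactness, and applying $\Hom_X(\Oscr_X(-i-a-2(j+b)),-)$ produces, by left exactness, the sequence
\[
0\to \tilde A_{(i+2,j+1),(i+a,j+b)}\xrightarrow{g\cdot}\tilde A_{(i,j),(i+a,j+b)}\to \tilde B_{(i,j),(i+a,j+b)}.
\]
Exactness at the middle term is then the desired reverse inclusion.

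The main obstacle is the identification of the third Hom group with $\tilde B_{(i,j),(i+a,j+b)}$, which requires tracking how the ideal bimodules $m_q$ restrict to $\Qcoh(Y)$ (becoming $\Oscr_Y(-q)$ after suitable twists) via the equivalence $\QGr(B)\simeq \Qcoh(Y)$. I would handle this by systematic bookkeeping in the bimodule category $\Bimod(X-X)$ as in \cite[Ch.~3]{VdB19}, where such snake-lemma computations become routine.
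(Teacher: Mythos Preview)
Your argument is essentially correct but takes a genuinely different route from the paper. The paper's proof is a pure dimension count: from the explicit Hilbert functions of $\tilde A$ and $\tilde B$ (the lemma immediately following), one reads off for part~(1) that $\dim\tilde A_{(i,j),(i+2,j+1)}-\dim\tilde B_{(i,j),(i+2,j+1)}=8-7=1$, and for part~(2) that the obvious containment $g_{(i,j)}\tilde A_{(i+2,j+1),(i+a,j+b)}\subset\Ker$ is an equality because
\[
\dim\tilde A_{(i,j),(i+a,j+b)}-\dim\tilde A_{(i+2,j+1),(i+a,j+b)}=2a+3b=\dim\tilde B_{(i,j),(i+a,j+b)}.
\]
No bimodule machinery is invoked at all.

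Your approach instead builds the relevant exact sequence directly inside $\Bimod(X\!-\!X)$, which is more conceptual and in some sense explains \emph{why} the dimension identities hold; it also makes the normalizing property of the $g_{(i,j)}$ transparent via centrality of $g$ in $A$. The cost is that the step you flag as ``systematic bookkeeping''---identifying $\Hom_X(\Oscr_X(-i-a-2j-2b),\Oscr_Y(-\tau^{-j}p)\otimes\Oscr_X(-i-2j)N)$ with $\tilde B_{(i,j),(i+a,j+b)}$---is exactly the content needed for the reverse inclusion, and it is not free: one must check that tensoring $\Oscr_Y$ by the point-ideal bimodules $m_q$ on the $X$-side produces $\Oscr_Y(-q)$ on the $Y$-side under the equivalence $\QGr(B)\simeq\Qcoh(Y)$. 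This is true and is the kind of computation carried out in \cite{VdB19} and \cite{PresVdB}, but until it is written down your argument is a sketch, whereas the paper's dimension count is complete once the Hilbert functions are established.
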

\begin{proof}
$(1)$ follows from Lemma \ref{lem:dimtilde}. \\
In order to prove $(2)$ we only prove the first equality. The second equality is analogous.
Now for each $i,j \in \mathbb{Z}$ take 
\[ g_{(i,j)} \in \Ker \left( \tilde{A}_{(i,j),(i+2,j+1)} \rightarrow \tilde{B}_{(i,j),(i+2,j+1)}\right) \setminus \{ 0 \} \]
The following diagram shows $ g_{(i,j)}\tilde{A}_{(i+2,j+1),(i+a,j+b)} \subset \Ker \left( \tilde{A}_{(i,j),(i+2,j+1)} \rightarrow \tilde{B}_{(i,j),(i+2,j+1)} \right)$
\begin{center}
\begin{tikzpicture}
\matrix(m)[matrix of math nodes,
row sep=3em, column sep=1em,
text height=1.5ex, text depth=0.25ex]
{ 0 & & \\
 g_{(i,j)}k \otimes \tilde{A}_{(i+2,j+1),(i+a,j+b)} & & g_{(i,j)} \tilde{A}_{(i+2,j+1),(i+a,j+b)} \\
 \tilde{A}_{(i,j),(i+2,j+1)} \otimes \tilde{A}_{(i+2,j+1),(i+a,j+b)} & & \tilde{A}_{(i,j),(i+a,j+b)}\\
 \tilde{B}_{(i,j),(i+2,j+1)} \otimes \tilde{A}_{(i+2,j+1),(i+a,j+b)} & \tilde{B}_{(i,j),(i+2,j+1)} \otimes \tilde{B}_{(i+2,j+1),(i+a,j+b)}& \tilde{B}_{(i,j),(i+a,j+b)} \\
 0 & & 0 \\};
\path[->,font=\scriptsize]
(m-1-1) edge (m-2-1)
(m-2-1) edge (m-3-1)
        edge (m-2-3)
(m-2-3) edge (m-3-3)
(m-3-1) edge (m-3-3)
        edge (m-4-1)
(m-3-3) edge (m-4-3)
(m-4-1) edge (m-4-2)
        edge (m-5-1)
(m-4-2) edge (m-4-3)
(m-4-3) edge (m-5-3);
\end{tikzpicture}
\end{center}
Hence it suffices to show that the alternating sum of the dimensions of the spaces in the right column equals zero. This again follows form Lemma \ref{lem:dimtilde}.
\end{proof}

\begin{corollary}
The collection $\{ g_{(i,j)} \}_{i,j \in \mathbb{Z}}$ is normalizing in the sense that if $a~\in~\tilde{A}_{(i+2,j+1),(m,n)}$ then there exists a unique $a'~\in~\tilde{A}_{(i,j),(m-2,n-1)}$ such that $g_{(i,j)}a=a'g_{(m-2,n-1)}$.
As such, for every right $\tilde{A}$-module $M$ one can consider the right $\tilde{A}$-module $Mg$ defined by $(Mg)_{(i,j)} := M_{(i-2,j-1)}g_{(i-2,j-1)}$.
\end{corollary}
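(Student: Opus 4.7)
My plan is to deduce both assertions directly from Lemma \ref{lem:BisAgA}(2), making essential use of the two descriptions of the kernel that the lemma provides.

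For the normalizing assertion, let $a \in \tilde{A}_{(i+2,j+1),(m,n)}$, so that $m - i \geq 2$ and $n - j \geq 1$ and Lemma \ref{lem:BisAgA}(2) applies to the map $\tilde{A}_{(i,j),(m,n)} \rightarrow \tilde{B}_{(i,j),(m,n)}$. The element $g_{(i,j)}a$ lies in $g_{(i,j)}\tilde{A}_{(i+2,j+1),(m,n)}$, which by the lemma also equals $\tilde{A}_{(i,j),(m-2,n-1)}g_{(m-2,n-1)}$. This immediately produces an $a' \in \tilde{A}_{(i,j),(m-2,n-1)}$ with $g_{(i,j)}a = a'g_{(m-2,n-1)}$, establishing existence. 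Uniqueness follows from observation (iii) recalled in Section \ref{sec:summarycon}: since $\tilde{A}_{(i,j),(m,n)} \subset A_{i+2j,\,m+2n}$ and $A$ is a domain, right multiplication by the nonzero element $g_{(m-2,n-1)}$ is injective, so at most one such $a'$ can exist.

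For the second assertion, I would check that the graded subspaces $(Mg)_{(i,j)} := M_{(i-2,j-1)}g_{(i-2,j-1)} \subseteq M_{(i,j)}$ are closed under the right action of $\tilde{A}$ inherited from $M$. Given $x = m\,g_{(i-2,j-1)} \in (Mg)_{(i,j)}$ and $a \in \tilde{A}_{(i,j),(p,q)}$, the normalizing property just established (applied with indices shifted by $(-2,-1)$) furnishes $a' \in \tilde{A}_{(i-2,j-1),(p-2,q-1)}$ with $g_{(i-2,j-1)}\,a = a'\,g_{(p-2,q-1)}$, and so
\[
x \cdot a = m\bigl(g_{(i-2,j-1)}\,a\bigr) = (m\,a')\,g_{(p-2,q-1)} \in M_{(p-2,q-1)}\,g_{(p-2,q-1)} = (Mg)_{(p,q)}.
\]
Hence $Mg$ is a graded $\tilde{A}$-submodule of $M$, and of course inherits a right $\tilde{A}$-module structure.

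The only non-trivial input is Lemma \ref{lem:BisAgA}(2), which is already in hand; everything else is formal manipulation exploiting the fact that $\tilde{A}$ sits inside a domain. I therefore do not anticipate any serious obstacle.
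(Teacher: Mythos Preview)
Your argument is exactly what the paper has in mind: the corollary is stated there without proof, as an immediate consequence of Lemma~\ref{lem:BisAgA}(2), and your write-up spells out precisely that deduction together with the uniqueness via the absence of zero-divisors in $\tilde{A}$.

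One small caveat: the phrase ``so that $m-i\ge 2$ and $n-j\ge 1$'' does not follow from $a\in\tilde{A}_{(i+2,j+1),(m,n)}$ alone, since $\tilde{A}$ (as opposed to $\tilde{A}_+$) is not concentrated in the first quadrant; for instance $\tilde{A}_{(i+2,j+1),(m,j)}\cong A_{m-i-2}$ is nonzero once $m\ge i+2$. Lemma~\ref{lem:BisAgA}(2) is only stated for the range $m\ge i+2,\ n\ge j+1$, so strictly speaking you have established the normalizing property---and hence that $Mg$ is a submodule---only for the action of $(\tilde{A}_+)_{(i+2,j+1),(m,n)}$. This is harmless for the paper's purposes, since the sole application (Step~2 in the proof of Theorem~\ref{thm:Atildenoeth}) concerns $\tilde{A}_+$-modules, and the paper's own implicit argument carries the same restriction.
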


\begin{lemma}
\label{lem:dimtilde}
\begin{eqnarray*}
\dim_k \left( \tilde{A}_{(i,j),(i+a,j+b)} \right) & = & \begin{cases} \frac{a^2 +4ab+ 2b^2 +4a+6b+4}{4} & \textrm{ for $a,b \geq 0$, $a$ even } \\
\frac{a^2 +4ab+ 2b^2 +4a+6b+3}{4} & \textrm{ for $a,b \geq 0$, $a$ odd } \\
 \end{cases}\\
 & \textrm{and} & \\
\dim_k \left( \tilde{B}_{(i,j),(i+a,j+b)} \right) & = & \begin{cases} 2a+3b & \textrm{ for $a,b \geq 0$, $(a,b) \neq (0,0)$ } \\ 1& \textrm{for $(a,b) = (0,0)$} \end{cases} \\ 
\end{eqnarray*} 
\end{lemma}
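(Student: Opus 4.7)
My plan is to prove the two dimension formulas separately: the $\tilde B$-formula by a direct Riemann--Roch computation on the elliptic curve $Y$, and the $\tilde A$-formula by induction on $b$, reducing via the central element $g$.

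For $\tilde B_{(i,j),(i+a,j+b)}$: by definition this is $H^0(Y,\Mscr)$ where $\Mscr$ is the tensor product of $a+2b$ twists of $\Lscr$ with $\Oscr_Y\bigl(-\sum_{k=0}^{b-1}\tau^{-j-k}p\bigr)$ (and no divisor term when $b=0$). In the cubic Sklyanin setting $\Lscr$ has degree $2$ (so that $\dim A_1 = 2$), hence
\[ \deg\Mscr = 2(a+2b) - b = 2a + 3b. \]
Whenever $(a,b)\neq (0,0)$ this is strictly positive, so Serre duality on $Y$ (using $\Omega^1_Y\cong\Oscr_Y$) forces $H^1(Y,\Mscr)=0$ and Riemann--Roch yields $\dim H^0(Y,\Mscr) = 2a+3b$. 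When $(a,b)=(0,0)$, $\Mscr \cong \Oscr_Y$ and $\dim H^0 = 1$.

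For $\tilde A_{(i,j),(i+a,j+b)}$ I would induct on $b$. The base case $b=0$ is the identification $\tilde A_{(i,j),(i+a,j)} \cong A_a$, whose dimension comes from the Hilbert series $h_A(t) = 1/((1-t)^2(1-t^2))$ of the cubic Sklyanin, yielding $(k+1)^2$ for $a=2k$ and $(k+1)(k+2)$ for $a=2k+1$, matching the formula. For $a \geq 2$ and $b \geq 1$, I would exploit the central non-zero-divisor $g \in A_4$ (with $A/gA\cong B$) to establish the short exact sequence
\[ 0 \to \tilde A_{(i+2,j+1),(i+a,j+b)} \xrightarrow{g_{(i,j)}\cdot} \tilde A_{(i,j),(i+a,j+b)} \to \tilde B_{(i,j),(i+a,j+b)} \to 0, \]
yielding the recursion $\dim \tilde A^{a,b} = \dim \tilde A^{a-2,b-1} + \dim \tilde B^{a,b}$. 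Combined with the base case and the formula for $\tilde B$, a direct arithmetic check produces the claimed formulas (equivalently, $\dim \tilde A^{a,b} = \dim A_{a+2b} - \binom{b+1}{2}$). For $a\in\{0,1\}$, where the recursion would pull $a-2$ outside the intended range, I would instead use the identification $\tilde A_{(0,j),(0,j+b)} \cong A'_b$ (observation (ii) of Section~\ref{sec:summarycon}) with $\dim A'_b = \binom{b+2}{2}$ from $h_{A'}(t) = 1/(1-t)^3$ for $a=0$, and an analogous direct computation for $a=1$ (one may legitimately extend the recursion to $a=-1$, as $\tilde A^{-1,b}$ still makes sense and its dimension reduces to a Riemann--Roch computation on $Y$).

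The main obstacle is justifying the short exact sequence independently of Lemma~\ref{lem:BisAgA}, which itself depends on the current lemma. Surjectivity onto $\tilde B$ is supplied by \cite[Lemma~8.9]{Presotto}, and injectivity of multiplication by $g_{(i,j)}$ is immediate from $g$ being a non-zero-divisor in $A$ and from $\tilde A_{(i,j),(m,n)} \subset A_{m+2n-i-2j}$. The delicate point is middle exactness: every element of $\tilde A$ vanishing in $\tilde B$ must be divisible by $g_{(i,j)}$. This should follow from Van den Bergh's bimodule description of noncommutative surfaces --- such an element corresponds to a morphism factoring through $\Oscr_X(-Y) \subset \Oscr_X$, where $Y$ is the divisor cut out by $g$, and such factorizations are in natural bijection with elements of the smaller Hom-space scaled by $g_{(i,j)}$.
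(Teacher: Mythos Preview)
Your computation for $\tilde B$ via Riemann--Roch is exactly what the paper has in mind (``calculating sections of line bundles on an elliptic curve''), and it is correct.

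For $\tilde A$ your route differs from the paper's.  The paper does \emph{not} use the $g$-recursion here; instead it defers to the computation in \cite[\S 6]{PresVdB}, which calculates the $\Hom$-spaces
\[
\Hom_X\bigl(\Oscr_X(-m-2n),\;\Oscr_X(-i-2j)\,m_{\tau^{-j}p}\cdots m_{\tau^{-n+1}p}\bigr)
\]
directly, essentially by peeling off one $m_p$ at a time via the bimodule exact sequence $0\to m_p\to o_X\to o_p\to 0$ and using Ext-vanishing on $X$.  Only \emph{after} the dimensions are known does the paper deduce \Cref{lem:BisAgA}.  Your strategy reverses this logical order, so you rightly flag the circularity and attempt to break it by proving the short exact sequence independently.

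The gap lies in the ``delicate point''.  Injectivity and surjectivity are fine for the reasons you give, but middle exactness is not as automatic as your last paragraph suggests.  Knowing that $x\in\tilde A_{(i,j),(m,n)}$ dies in $B$ gives $x=gy$ with $y\in A_{m+2n-i-2j-4}$; what you must show is that $y$ actually lands in the smaller subspace $\tilde A_{(i+2,j+1),(m,n)}$, i.e.\ that the morphism $y$ factors through $\Oscr_X(-i-2j-4)\,m_{\tau^{-j-1}p}\cdots m_{\tau^{-n+1}p}$.  The naive bimodule argument (tensor the sequence $0\to o_X(-4)\to o_X\to o_Y\to 0$ with $\Oscr_X(-i-2j)\mathfrak m$ and use $m_q\,o_X(-4)=o_X(-4)\,m_{\tau q}$) identifies the kernel with a $\Hom$-space whose $m$-indices are shifted relative to those defining $\tilde A_{(i+2,j+1),(m,n)}$; reconciling this shift with the map ``left-multiply by $g_{(i,j)}$'' requires a further identification that you have not supplied.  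This is precisely the step the paper avoids by computing the dimensions first and then reading off exactness from the alternating-sum-of-dimensions argument in \Cref{lem:BisAgA}.  If you want to keep your inductive approach, you should either carry out that bimodule identification carefully, or fall back on the direct \cite[\S 6]{PresVdB} method for the base cases and the $m_p$-peeling argument in general, which is what the paper intends.
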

\begin{proof}
For $\tilde{B}$ the equality is trivial as this corresponds to calculating sections of line bundles on an elliptic curve. For $\tilde{A}$ the computation is done in a similar way as in \cite[\S 6]{PresVdB}.
\end{proof}

In \S \ref{sec:Btildenoeth} we will slightly generalize the arguments in \cite[\S 3]{AV} to show
\begin{theorem} \label{thm:Btildenoeth}
$\tilde{B}_{+}$ is Noetherian.
\end{theorem}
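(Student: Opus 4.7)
The plan is to adapt the strategy of \cite[\S 3]{AV}---where Noetherianness of twisted homogeneous coordinate rings $B(Y,\Lscr,\sigma)$ is deduced from Noetherianness of coherent sheaves on $Y$ via a sheafification functor---to the $\mathbb{Z}^2$-graded algebra $\tilde{B}_+$. Since $\Gr(\tilde{B}_+)$ is locally Noetherian as soon as each generator $e_{(i_0,j_0)}\tilde{B}_+$ is a Noetherian object, I would fix $(i_0,j_0) \in \mathbb{Z}^2$ and consider an ascending chain of graded submodules $N_1 \subset N_2 \subset \ldots$ of $P := e_{(i_0,j_0)}\tilde{B}_+$.

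First I would construct a sheafification functor $M \mapsto \widetilde{M}$ from graded right $\tilde{B}_+$-modules to quasi-coherent $\Oscr_Y$-modules. By definition every bidegree component $P_{(m,n)}$ is the space of global sections of a line bundle $\Lscr^P_{(m,n)}$ on $Y$ of the form $\sigma^{*(i_0+2j_0)}\Lscr \cdots \sigma^{*(m+2n-1)}\Lscr(-\tau^{-j_0}p-\cdots-\tau^{-n+1}p)$; since $\deg \Lscr \geq 2$, for $(m,n)$ sufficiently large these line bundles are very ample and globally generated. Fixing nonzero sections then identifies $\{P_{(m,n)} \otimes (\Lscr^P_{(m,n)})^{-1}\}$ with a directed system of coherent subsheaves of the constant rational function sheaf on $Y$; its colimit is $\widetilde{P}$, and for any graded submodule $N \subset P$ the analogous construction yields a coherent subsheaf $\widetilde{N} \hookrightarrow \widetilde{P}$.

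Second I would check that multiplication by an element of $\tilde{B}_+$ of degree $(1,0)$ or $(0,1)$ translates, at the level of sheaves on $Y$, to tensoring with a $\sigma$-twist of $\Lscr$ (respectively of $\Lscr \otimes \sigma^*\Lscr(-p)$ along columns), and that the resulting sheafification functor is exact in high bidegree. This is the $\sigma$-ampleness input of \cite[\S 3]{AV} applied row-by-row and column-by-column: each row $\{\tilde{B}_{(i,j),(i+a,j)}\}_{a \ge 0}$ and each column $\{\tilde{B}_{(i,j),(i,j+b)}\}_{b \ge 0}$ is, up to a shift, a Veronese of a twisted homogeneous coordinate ring on $Y$, and the point-divisor corrections along columns are absorbed into the already-used line bundle $\Lscr \otimes \sigma^*\Lscr(-p)$ of \cite{PresVdB}. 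With these ingredients in place, the chain $\widetilde{N}_1 \subset \widetilde{N}_2 \subset \ldots$ of coherent subsheaves of $\widetilde{P}$ must stabilize since coherent sheaves on the smooth curve $Y$ are Noetherian, and the ampleness-based recovery $N_{(m,n)} = \Gamma(Y,\widetilde{N} \otimes \Lscr^P_{(m,n)})$ valid for large $(m,n)$ forces $N_k = N_{k+1} = \ldots$ in every sufficiently high bidegree. The remaining finitely many low-bidegree components are finite-dimensional $k$-vector spaces by \Cref{lem:dimtilde}, closing out ACC.

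The main obstacle I anticipate is bookkeeping the asymmetric point-divisor correction $-\tau^{-j}p - \cdots - \tau^{-n+1}p$ in the definition of $\tilde{B}$, which couples source and target bidegree in a way that prevents a direct reduction to two independent one-dimensional applications of \cite{AV}. The observation that these corrections can be absorbed into shifts of $\Lscr \otimes \sigma^*\Lscr(-p)$---precisely the line bundle used to identify the $B$-ring quotient of $\check{A'}$ in \cite{PresVdB}---is what I expect makes the argument go through; verifying $\sigma$-ampleness of these corrected bundles and the exactness of the associated $\mathbb{Z}^2$-graded sheafification is where the bulk of the technical work will reside.
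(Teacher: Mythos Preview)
Your overall strategy---sheafify, use Noetherianness of $\coh(Y)$ to stabilize the chain of sheaves, then recover the graded module from global sections---is precisely the paper's approach; \S\ref{sec:Btildenoeth} packages it as an abstract statement about \emph{ample $\mathbb{Z}^2$-sequences} (\Cref{thm:Bscrnoeth}), of which $\tilde{B}_+$ is an instance (\Cref{prp:BtildeisBscr}). The gap is in your final step. You assert that once the sheafifications $\widetilde{N_k}$ stabilize, the recovery $N_{(m,n)}=\Gamma(Y,\widetilde{N}\otimes\Lscr^P_{(m,n)})$ holds for large $(m,n)$ and that only \emph{finitely many} low-bidegree components remain. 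This is where the $\mathbb{Z}^2$-graded situation genuinely differs from the $\mathbb{Z}$-graded one in \cite{AV}: the cokernel of $N\to\overline{N}$ is torsion, and a finitely generated torsion module is only \emph{right-upper-bounded}, i.e.\ vanishes on a shifted first quadrant $\{m\geq m_0,\ n\geq n_0\}$. The complement inside $\{m\geq i_0,\ n\geq j_0\}$ is an infinite L-shaped region, not a finite box. Concretely, if $N\subset P=e_{(0,0)}\tilde{B}_+$ is the submodule generated by $P_{(1,0)}$, then $\widetilde{N}=\widetilde{P}$ (they agree on a cofinal set of bidegrees), yet $N_{(0,n)}=0\neq P_{(0,n)}$ for every $n\geq 0$; the recovery fails along an entire infinite column, and \Cref{lem:dimtilde} cannot close out ACC there.

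The paper supplies two ingredients you are missing. First, \Cref{lem:radfingen} and \Cref{lem:cohfingen} show that $\overline{N}_{(\geq a,\geq b)}$ is finitely generated whenever $\widetilde{N}$ is coherent; without this one does not even know the torsion cokernel is right-upper-bounded. Second---and this is the substantive new step over \cite{AV}---\Cref{lem:fingenSES}(2) descends finite generation from $\overline{N}_{(\geq a,\geq b)}$ to $N$ across the L-shaped region by an induction that strips off one row or column at a time, invoking at each step the Noetherianness of the row and column $\mathbb{Z}$-algebras (which are ordinary twisted homogeneous coordinate rings on $Y$). You do mention the row/column structure, but only in the context of verifying $\sigma$-ampleness; it is this second, finite-generation role that carries the weight, and your sketch does not account for it.
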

Using this result as well as \Cref{lem:BisAgA} we can prove
\begin{theorem} \label{thm:Atildenoeth}
$\tilde{A}$ is Noetherian
\end{theorem}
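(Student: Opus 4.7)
My plan is to prove the statement in two stages: first show $\tilde{A}_+$ is Noetherian via a normal-element lifting argument, then transfer the property to $\tilde{A}$.

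For the first stage, I would use \Cref{lem:BisAgA} together with \Cref{thm:Btildenoeth}. The collection $\{g_{(i,j)}\}$ is normalizing by the corollary following \Cref{lem:BisAgA}, and its elements are non-zero-divisors because $\tilde{A}$ embeds into the domain $A$ (observation (iii) of \S\ref{sec:summarycon}: $\tilde{A}_{(i,j),(m,n)} \subset A_{i+2j,m+2n}$). By \Cref{lem:BisAgA}(2), the quotient $\tilde{A}_+/g\tilde{A}_+$ agrees with $\tilde{B}_+$ in all degrees $(a,b)$ with $a \geq 2$, $b \geq 1$; the finitely many lower-boundary degrees only contribute a finite-dimensional discrepancy, which does not affect Noetherianness. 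Since $\tilde{B}_+$ is Noetherian by \Cref{thm:Btildenoeth}, I would apply the classical normal-element lifting argument: for any finitely generated graded right $\tilde{A}_+$-module $M$ and any ascending chain of submodules $N_1 \subset N_2 \subset \cdots$, the induced chain in $M/Mg$ is a chain of submodules in a finitely generated $\tilde{B}_+$-module and hence stabilizes, while the descending $g$-adic filtration $M \supset Mg \supset Mg^2 \supset \cdots$ has trivial intersection in the positively graded setting, so the original chain stabilizes as well.

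For the second stage, I would transfer Noetherianness from $\tilde{A}_+$ to the full $\tilde{A}$. The generators $e_{(i,j)}\tilde{A}$ of $\Gr(\tilde{A})$ have nonzero components in the half-plane $\{(m,n) : m + 2n \geq i + 2j\}$, which is strictly larger than the first quadrant rooted at $(i,j)$. I would control the ``outside'' rows by means of the $1$-dimensional ``downward'' spaces spanned by $\delta_{(i,j)}$ and $\gamma_{(i,j)}$ (observation (iv) of \S\ref{sec:summarycon}): an element of $\tilde{A}_{(i,j),(m,n)}$ with $n < j$ can be expressed (up to a bounded correction) as a product involving $\delta_{(\cdot,\cdot)}$ or $\gamma_{(\cdot,\cdot)}$ and a shifted first-quadrant component. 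Consequently, any $\tilde{A}$-submodule of $e_{(i,j)}\tilde{A}$ is generated by its intersection with an appropriate shifted first-quadrant subalgebra together with a bounded set of additional generators, reducing ACC on $\tilde{A}$-submodules to ACC on (shifted) $\tilde{A}_+$-modules, which is established in stage one.

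The main obstacle is the second stage: carefully justifying that the non-first-quadrant components of a $\tilde{A}$-submodule are determined by their first-quadrant part plus finitely many corrections coming from the $\delta_{(i,j)}$ and $\gamma_{(i,j)}$. An alternative route would be to run the normal-element lifting argument directly on $\tilde{A}$ rather than on $\tilde{A}_+$, using an enlarged family of normal regular elements that includes the $g_{(i,j)}$ together with the $\delta_{(i,j)}$ and $\gamma_{(i,j)}$; this is conceptually cleaner but requires verifying that each element is normal and regular across the full $\mathbb{Z}^2$-algebra, not only inside $\tilde{A}_+$.
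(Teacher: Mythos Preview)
Your overall strategy --- first-quadrant via the normalizing family $\{g_{(i,j)}\}$ and $\tilde{B}_+$, then the rest via the one-dimensional connecting spaces --- is close in spirit to the paper's proof, which also splits into a first-quadrant step (its Step~2) and an outside-the-quadrant step (its Step~1). But your Stage~1 contains a real gap.

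You write that ``the finitely many lower-boundary degrees only contribute a finite-dimensional discrepancy, which does not affect Noetherianness.'' This is false. \Cref{lem:BisAgA}(2) only identifies the kernel of $\tilde{A}_+\to\tilde{B}_+$ with $g\tilde{A}_+$ in bidegrees $(a,b)$ with $a\ge 2$ and $b\ge 1$; the remaining bidegrees inside the first quadrant form three \emph{infinite} rays (the row $b=0$ and the columns $a\in\{0,1\}$), on which $(e_{(i,j)}\tilde{A}_+/g_{(i,j)}\tilde{A}_+)_{(i+a,j+b)}$ equals $\tilde{A}_{(i,j),(i+a,j+b)}$ itself. So $M/Mg$ is not a $\tilde{B}_+$-module up to a finite error, and Noetherianity of $\tilde{B}_+$ alone cannot stabilize the induced chain in $M/Mg$. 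The paper meets exactly this difficulty in its Step~2: after a Zorn--Snake reduction it must show that a certain submodule $\im(\epsilon)\subset e_{(0,0)}\tilde{A}_+/g_{(0,0)}\tilde{A}_+$ is finitely generated, and it does so by treating the interior $a\ge 2,\ b\ge 1$ via $\tilde{B}_+$ and the three boundary rays \emph{separately}, using that the row $b=0$ carries an $A$-module structure and the columns $a=0,1$ carry (via $\delta$-conjugation) $A'$-module structures. Thus Noetherianity of $A$ and $A'$ is already indispensable in the first-quadrant stage, not only in your Stage~2.

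Your Stage~2 is on the right track but needs to be made precise. The paper (its Step~1) does this for $a\le 0$ as follows: right-multiplication by $\delta_{(a,b)}$ gives \emph{isomorphisms} $\tilde{A}_{(0,0),(a,b)}\cong\tilde{A}_{(0,0),(a-1,b+2)}$ (by the dimension count in \Cref{lem:dimtilde} together with domain-ness), so for each submodule $M_n$ and each $c\ge 0$ one extracts an $A'$-submodule $M_n^c\subset e_0A'$; Noetherianity of $A'$ is then applied twice (once to stabilize in $c$, once to stabilize in $n$) to conclude. The case $b\le 0$ is handled symmetrically using $A$.
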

\begin{proof}
Obviously the objects $e_{(i,j)}\tilde{A}$ generated $\Gr(\tilde{A})$, hence it suffices to show that these are Noetherian. Let
\begin{equation} \label{eq:inclusionchain} M_0 \subset M_1 \subset \ldots \subset e_{(i,j)}\tilde{A} \end{equation}
be an ascending chain of right $\tilde{A}$ modules. We need to show that this sequence stabilizes. This is done in two steps. First we show the existence of an $N_0 \in \mathbb{N}$ such that $(M_n)_{(i+a,j+b)} = (M_{N_0})_{(i+a,j+b)}$ holds for all $n \geq N_0$, $ab \leq 0$. This is based on Noetherianity of $A$ and $A'$. Next we show the existence of an $N_1 \in \mathbb{N}$ such that $(M_n)_{(i+a,j+b)} = (M_{N_1})_{(i+a,j+b)}$ holds for $a,b \geq 0$. Which is heavily based on \Cref{thm:Btildenoeth}. (This also explains why we only need Noetherianity of $\tilde{B}$ in the first quadrant in \Cref{thm:Btildenoeth}).\\
\underline{Step 1: Convergence of $\left((M_n)_{(i+a,j+b)}\right)_{n \in \mathbb{N}}$ for $ab \leq 0$} \\
As the cases $a \leq 0$ and $b \leq 0$ are analogous, we only prove the case $a \leq 0$. Moreover without loss of generality we can assume $i=j=0$. Recall that $\tilde{A}_{(m,n),(m-1,n+2)}$ is a 1-dimensional vector space and that $\delta_{(m,n)}$ is a nonzero element in this vector space. Moreover as $\tilde{A}$ has no non-trivial zero divisors in the sense that 
\[ \forall a,b,i,j,m,n \in \mathbb{Z}, x \in \tilde{A}_{(a,b),(i,j)}, y \in \tilde{A}_{(i,j),(m,n)}: xy = 0 \Rightarrow x=0 \lor y=0 \]
multiplication by these elements defines embeddings of vectorspaces. By the dimension count in \Cref{lem:dimtilde} the embeddings
\[ \tilde{A}_{(0,0),(a,b)} \hookrightarrow \tilde{A}_{(0,0),(a-1,b+2)}: x \mapsto x \delta_{(a,b)} \]
are in fact isomorphisms. Moreover these isomorphisms induce embeddings \\
${(M_n)_{(a,b)} \hookrightarrow (M_n)_{(a-1,b+2)}}$. In particular if we define $M_n^c \subset e_0 A'$ via
\[ (M_n^c)_m := \{ x \in A'_{0,m} \cong \tilde{A}_{(0,0),(0,m)} \mid x \delta^c  \in (M_n)_{(-c,m+2c)} \} \]
(where we used the shorthand notation $\delta^c = \delta_{(0,m)}\delta_{(-1,m+2)} \ldots \delta_{(-c+1,m+2c-2)}$.)\\
we find an ascending chain of $A'$-submodules of $e_0 A'$. (The $A'$-structure follows from the fact that $A'_{m,m'} = \tilde{A}_{(0,m),(0,m')}$ is isomorphic to $\tilde{A}_{(-c,m+2c),(-c,m'+2c)}$ via $\delta^{-c}(\cdots)\delta^c$). As $A'$ is Noetherian this sequence must stabilize. I.e. for each $n$ there is a natural number $c_n$ such that $M_n^{c_n} = M_n^c$ holds for all $c \geq c_n$. Moreover \eqref{eq:inclusionchain} induces
\[ M_0^{c_0} \subset M_1^{c_1} \subset \ldots \subset e_0 A' \]
This chain must stabilize as well. In particular there is some $N$ such that ${M_n^{c_n}=M_N^{c_N}}$ holds for all $n \geq N$. Going back to \eqref{eq:inclusionchain} and the definition of $M_n^c$ one sees that $c_N \geq c_n$ must hold for all $n \geq N$. Hence $M_n^c = M_N^c$ for all $n \geq N, c \geq c_N$. Similarly the (finitely many!) chains of inclusions
\begin{eqnarray*}
M_0^0 \subset M_1^0 \subset & \ldots &  \subset e_0A' \\
M_0^1 \subset M_1^1 \subset & \ldots &  \subset e_0A' \\
 & \vdots & \\
M_0^{c_N-1} \subset M_1^{c_N-1} \subset & \ldots &  \subset e_0A'
\end{eqnarray*}
must similtanuously stabilize. Hence there exists a $N_0 \in \mathbb{N}$ such that $M_n^c = M_{N_0}^c$ holds for all $n \geq N_0$ and all $c \in \mathbb{N}$. But this is exactly the same as
\[ \forall n \geq N_0, a \leq 0, b \in \mathbb{Z}: (M_n)_{(a,b)} = (M_{N_0})_{(a,b)} \]
\underline{Step 2: Convergence of $\left((M_n)_{(a,b)}\right)_{n \in \mathbb{N}}$ for $a,b \geq 0$} \\
Let $\tilde{A}_{+}$ be defined as in \Cref{not:firstquadrant} and define $M_{n,(\geq 0, \geq 0)}$ via
\begin{equation} \label{eq:geqageqb} \left(M_{(\geq a, \geq b)}\right) = \begin{cases}
M_{(i,j)} & \textrm{if $i \geq a, j \geq b$} \\
0 & \textrm{else}
\end{cases}\end{equation}

then $\left((M_{n,(\geq 0, \geq 0)})\right)_{n \in \mathbb{N}}$ defines an ascending chain of $\tilde{A}_{+}$-submodules of $e_{(0,0)} \tilde{A}_{+}$. Hence it suffices to show that every $\tilde{A}_{+}$-submodule of $e_{(0,0)} \tilde{A}_{+}$ is finitely generated in the sense that it can be written as a quotient of some finite direct sum of projective generators $\displaystyle \bigoplus_{n=1}^{n_0} e_{(i_n, j_n)} \tilde{A}_{+}$. 
This is shown in a similar way as in \cite[Lemma 8.2]{ATV1}:\\

By way of contradiction suppose that there is some submodule $L \subset e_{(0,0)} \tilde{A}_{+}$ which is not finitely generated. Using Zorn's Lemma $L$ can be chosen maximal with respect to the inclusion of submodules. The quotient $\overline{A} = e_{(0,0)} \tilde{A}_{+}/L$ as well as all its submodules must hence be finitely generated. Now consider the following diagram

\begin{center}
\begin{tikzpicture}
\matrix(m)[matrix of math nodes,
row sep=3em, column sep=3em,
text height=1.5ex, text depth=0.25ex]
{ 0 & L & e_{(0,0)} \tilde{A}_{+} & \overline{A} & 0 \\
 0 & L & e_{(0,0)} \tilde{A}_{+} & \overline{A} & 0\\};
\path[->,font=\scriptsize]
(m-1-1) edge (m-1-2)
(m-1-2) edge (m-1-3)
        edge node[left]{$\cdot g$} (m-2-2)
(m-2-2) edge (m-2-3)
(m-2-1) edge (m-2-2)
(m-1-3) edge node[left]{$\cdot g$} (m-2-3)
        edge (m-1-4)
(m-1-4) edge (m-1-5)
        edge node[left]{$\cdot g$} (m-2-4)
(m-2-4) edge (m-2-5)
(m-2-3) edge (m-2-4);
\end{tikzpicture}
\end{center}
Applying the Snake Lemma (and the fact that the $g_{(i,j)}$ are normalizing, non-zero divisors) provides an exact sequence
\[ 0 \rightarrow K \rightarrow L/Lg \overset{\epsilon}{\rightarrow} e_{(0,0)} \tilde{A}_{+}/ g_{(0,0)} \tilde{A}_{+}  \]
where $K = \Ker(\overline{A} \overset{\cdot g}{\rightarrow} \overline{A} )$. As mentioned above, all submodules of $\overline{A}$ are finitely generated, hence $K$ is finitely generated. We also claim $\im(\epsilon) \subset e_{(0,0)} \tilde{A}_{+}/ g_{(0,0)} \tilde{A}_{+}$ is finitely generated. From this claim it follows that $L/Lg$ is finitely generated as an extension of finitely generated modules and hence $L$ is finitely generated as well (as the elements $g_{(i,j)}$ live in strictly positive degrees). Thus it suffices to prove the claim. First we prove the existence of a map
\[ \bigoplus_{n=1}^{n_1} e_{(i_n, j_n)} \tilde{A}_{+} \rightarrow \im(\epsilon) \]
which is surjective in degrees $(a,b)$ with $a \geq 2$, $b \geq 1$. Note that the elements in $\im(\epsilon)$ living in these degrees actually form a $\tilde{B}_{+}$-submodule by \Cref{lem:BisAgA}. By \Cref{thm:Btildenoeth} it must be finitely generated as a $\tilde{B}_{+}$-module and hence also as an $\tilde{A}_{+}$-module. Next we prove the existence of a map
\begin{equation} \label{eq:map0} \bigoplus_{n=1}^{n_2} e_{(i_n, j_n)} \tilde{A}_{+} \rightarrow \im(\epsilon) \end{equation}
which is surjective in degrees $(a,b)$ with $a < 2$ or $b = 0$. Note that $(\im(\epsilon)_{(a,0)})_{a \in \mathbb{N}}$ forms an $A$-submodule of $e_0A$. Hence there is an epimorphism
\[ \bigoplus_{m=1}^{m_2} e_{(i_m)} A \rightarrow (\im(\epsilon)_{(a,0)})_{a \in \mathbb{N}} \]
giving rise to a map
\begin{equation} \label{eq:map1} \bigoplus_{m=1}^{m_2} e_{(i_m,0)} \tilde{A}_{+} \rightarrow \im(\epsilon) \end{equation}
which is surjective in degrees $(a,0)$. Similarly we can construct an $m_2' \in \mathbb{N}$ and a map 
\begin{equation} \label{eq:map2} \bigoplus_{m=1}^{m_2'} e_{(0,j_m)} \tilde{A}_{+} \rightarrow \im(\epsilon) \end{equation}
which is surjective in degrees $(0,b)$. Finally using the fact that $(\tilde{A}_{+})_{(1,b), (1,b')}$ is isomorphic to $(\tilde{A}_{+})_{(0,b+2), (0,b'+2)} \cong A'_{b+2,b'+2}$ via $\delta^{-1}(\cdots)\delta$, we get a structure for $(\im(\epsilon)_{(1,b)})_{b \in \mathbb{N}}$ as a $A'$ submodule of $e_0 A'$. Hence there is a surjection
\[ \bigoplus_{m=1}^{m''_2} e_{(j_m)} A' \rightarrow (\im(\epsilon)_{(1,b)})_{b \in \mathbb{N}} \]
Note that we can assume $j_m \geq 2$ for all $m$ such that we find a map
\begin{equation} \label{eq:map3} \bigoplus_{m=1}^{m''_2} e_{(1,j_m-2)} \tilde{A}_{+} 
\rightarrow \im(\epsilon) \end{equation}
which is surjective in degrees $(1,b)$. Finally combining \eqref{eq:map1}, \eqref{eq:map2} and \eqref{eq:map3} we find the required map \eqref{eq:map0}, proving the claim.

\end{proof}

\section{Ample $\mathbb{Z}^2$-sequences and Noetherianity of $\tilde{B}_{+}$}
\label{sec:Btildenoeth}
This goal of this section is to prove a $\mathbb{Z}^2$-version of \cite[Theorem 1.4]{AV}. For this we introduce some ad hoc definitions:
\begin{definition}
Let $Y$ be a Noetherian variety. A projective $\mathbb{Z}^2$-sequence on $Y$ consists of two collections of line bundles $\{ \Lscr_{(i,j)} \}_{i,j \in \mathbb{Z}}, \{ \Gscr_{(i,j)} \}_{i,j \in \mathbb{Z}}$ satisfying
\begin{equation} \label{eq:LscrGscr}
\Lscr_{(i,j)} \Gscr_{(i+1,j)} = \Gscr_{(i,j)} \Lscr_{(i,j+1)}
\end{equation}
\end{definition}
\begin{remark}
We refer to these $\mathbb{Z}^2$-sequences as ``projective'' because they obviously generalize projective sequences as in \cite{Polishchuk}
\end{remark}
Of particular importance are ample $\mathbb{Z}^2$-sequences:

Given a projective $\mathbb{Z}^2$-sequence $\{ \Lscr_{(i,j)} \}_{i,j \in \mathbb{Z}}, \{ \Gscr_{(i,j)} \}_{i,j \in \mathbb{Z}}$, we associate a sheaf of $\mathbb{Z}^2$-algebras $\Bscr$ to it via
\begin{equation} \label{eq:defBscr}
\Bscr_{(i,j),(m,n)} = \begin{cases} \Lscr_{(i,j)} \Lscr_{(i+1,j)} \ldots \Lscr_{(m-1,j)}\Gscr_{(m,j)} \Gscr_{(m,j+1)} \ldots \Gscr_{(m,n-1)} & \textrm{ if $i \leq m, j \leq n$} \\
\left( \Lscr_{(m,j)} \Lscr_{(m+1,j)} \ldots \Lscr_{(i-1,j)} \right)^{-1} \Gscr_{(m,j)} \Gscr_{(m,j+1)} \ldots \Gscr_{(m,n-1)} & \textrm{ if $i > m, j \leq n$} \\
\Lscr_{(i,j)} \Lscr_{(i+1,j)} \ldots \Lscr_{(m-1,j)} \left( \Gscr_{(m,n)} \Gscr_{(m,n+1)} \ldots \Gscr_{(m,j-1)}\right)^{-1} & \textrm{ if $i \leq m, j > n$} \\
\left( \Lscr_{(m,j)} \Lscr_{(m+1,j)} \ldots \Lscr_{(i-1,j)} \right)^{-1}\left( \Gscr_{(m,n)} \Gscr_{(m,n+1)} \ldots \Gscr_{(m,j-1)}\right)^{-1} & \textrm{ if $i > m, j > n$} \\
\end{cases}
\end{equation}
We also associate a $\mathbb{Z}^2$-algebra $B = B \left( \{ \Lscr_{(i,j)} \}_{i,j \in \mathbb{Z}}, \{ \Gscr_{(i,j)} \}_{i,j \in \mathbb{Z}} \right)$ to the sequence via
\begin{equation} \label{eq:defBijmn} B_{(i,j),(m,n)} = \begin{cases} \Gamma(Y,\Bscr_{(i,j),(m,n)}) & \textrm{if $i \leq m, j \leq n$} \\ 0 & \textrm{else}  \end{cases} \end{equation}
\begin{definition} \label{def:amplesequence}
Let $\{ \Lscr_{(i,j)} \}_{i,j \in \mathbb{Z}}, \{ \Gscr_{(i,j)} \}_{i,j \in \mathbb{Z}}$ be a projective $\mathbb{Z}^2$-sequence and let $\Bscr$ be the associated sheaf of $\mathbb{Z}^2$-algebras, then we say the sequence is an ample sequence if for each coherent sheaf $\Fscr$ on $Y$ and for each $i,j \in \mathbb{Z}$ there exist $i_0,j_0 \in \mathbb{Z}$ such that 
\[ H^q(Y, \Fscr \otimes_Y \Bscr_{(i,j),(m,n)}) = 0 \]
holds for all $q > 0$ and $m \geq i_0, n \geq j$ or $m \geq i, n \geq j_0$.
\end{definition}
The following is immediate
\begin{proposition} \label{prp:BtildeisBscr}
With the notation as in \S \ref{sec:summarycon}: define $\Lscr_{(i,j)} = \sigma^{*(i+2j)}\Lscr$ and $\Gscr_{(i,j)} = \sigma^{*(i+2j)}\Lscr\sigma^{*(i+2j+1)}\Lscr(-\tau^{-j}p)$. Then $\{ \Lscr_{(i,j)} \}_{i,j \in \mathbb{Z}}, \{ \Gscr_{(i,j)} \}_{i,j \in \mathbb{Z}}$ is an ample $\mathbb{Z}^2$-sequence and $B\left( \{ \Lscr_{(i,j)} \}_{i,j \in \mathbb{Z}}, \{ \Gscr_{(i,j)} \}_{i,j \in \mathbb{Z}} \right) = \tilde{B}_{+}$.
\end{proposition}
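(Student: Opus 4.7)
The proof will consist of three checks: the cocycle relation \eqref{eq:LscrGscr}, the identification $B(\{\Lscr_{(i,j)}\},\{\Gscr_{(i,j)}\})=\tilde{B}_+$, and ampleness in the sense of \Cref{def:amplesequence}.

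For \eqref{eq:LscrGscr} I simply expand both sides: each of $\Lscr_{(i,j)}\otimes\Gscr_{(i+1,j)}$ and $\Gscr_{(i,j)}\otimes\Lscr_{(i,j+1)}$ unfolds to
\[\sigma^{*(i+2j)}\Lscr\otimes\sigma^{*(i+2j+1)}\Lscr\otimes\sigma^{*(i+2j+2)}\Lscr(-\tau^{-j}p),\]
so the projective $\mathbb{Z}^2$-sequence condition holds.

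For the identification with $\tilde{B}_+$, I unwrap the first-quadrant case of \eqref{eq:defBscr}. The horizontal row of $\Lscr$-factors contributes $\bigotimes_{a=i}^{m-1}\sigma^{*(a+2j)}\Lscr$, and each $\Gscr_{(m,b)}$-factor for $j\leq b<n$ contributes $\sigma^{*(m+2b)}\Lscr\otimes\sigma^{*(m+2b+1)}\Lscr(-\tau^{-b}p)$. The exponents $a+2j$, $m+2b$, $m+2b+1$ together cover the integers from $i+2j$ to $m+2n-1$ exactly once, and the twisting divisor is $\sum_{b=j}^{n-1}\tau^{-b}p$. Applying $\Gamma(Y,-)$ reproduces the two cases ($n>j$ and $n=j$) of the formula defining $\tilde{B}$ in \S\ref{sec:summarycon}; in particular the $n=j$ case has no $\Gscr$-factors and no twist, as required.

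For ampleness, $\Lscr_{(i,j)}$ is a line bundle on the elliptic curve $Y$ of degree $s=\deg\Lscr$ and $\Gscr_{(i,j)}$ is a line bundle of degree $2s-1$, so from the explicit description above $\Bscr_{(i,j),(m,n)}$ is a line bundle of degree $s(m-i)+(2s-1)(n-j)$ when $n>j$, and of degree $s(m-i)$ when $n=j$. Since $\dim Y=1$ automatically kills $H^q$ for $q\geq 2$, only $H^1$ needs controlling. A standard argument on a curve, using Serre duality $H^1(Y,\Fscr\otimes\Mscr)^{\vee}\cong\Hom_Y(\Fscr,\Mscr^{-1})$ (since $\omega_Y=\Oscr_Y$) together with the Harder--Narasimhan filtration of the locally free part of $\Fscr$, produces a constant $d_0=d_0(\Fscr)$ such that $H^1(Y,\Fscr\otimes\Mscr)=0$ for every line bundle $\Mscr$ with $\deg\Mscr\geq d_0$. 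Taking $i_0=i+\lceil d_0/s\rceil$ and $j_0=j+\lceil d_0/(2s-1)\rceil$ then forces $\deg\Bscr_{(i,j),(m,n)}\geq d_0$ on each of the two strips $\{m\geq i_0,\,n\geq j\}$ and $\{m\geq i,\,n\geq j_0\}$, which is the required ampleness.

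The first two steps are routine bookkeeping of tensor products and indices. The substantive content is entirely in Step 3, and even there the only nontrivial ingredient is the uniform cohomological vanishing for varying line bundles of sufficiently large degree on a curve, which is classical; this explains the author's claim that the proposition is ``immediate''.
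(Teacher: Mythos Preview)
Your proof is correct and follows essentially the same approach as the paper's: the cocycle check and the identification with $\tilde{B}_+$ are exactly what the paper dismisses as ``obvious'', and for ampleness both arguments reduce to a degree bound on the elliptic curve $Y$. The only variation is in how you produce the uniform vanishing of $H^1(Y,\Fscr\otimes\Mscr)$ for line bundles $\Mscr$ of large degree: the paper invokes the structure result \cite[Exercise II.6.11(c)]{hartshorne} to write $0\to\Lscr^{\oplus r}\to\Fscr\to\Tscr\to 0$ with $\Tscr$ torsion and then applies Riemann--Roch to the line-bundle summand, whereas you use Serre duality together with the Harder--Narasimhan filtration of the locally free part of $\Fscr$. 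Both routes are standard on a curve and yield the same conclusion; the paper's version is marginally more elementary, while yours makes the dependence on $\Fscr$ (via $\mu_{\min}$) a bit more explicit.
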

\begin{proof}
It is obvious that $\tilde{B}_{+}$ equals the $\mathbb{Z}^2$-algebra associated to the sequence and that \eqref{eq:LscrGscr} is satisfied. It only remains to show that the associated sheaf of $\mathbb{Z}^2$-algebras $\Bscr$ satisfies the condition in \Cref{def:amplesequence}. For this let $\Fscr$ be a coherent sheaf on $Y$. As $Y$ is a nonsingular elliptic curve, \cite[Exercice II.6.11(c)]{hartshorne} gives us the existence of a line bundle $\Lscr$ and a torsion sheaf $\Tscr$ together with a short exact sequence:
\[ 0 \rightarrow \Lscr^{\oplus r} \rightarrow \Fscr \rightarrow \Tscr \rightarrow 0 \]
where $r$ is the rank of $\Fscr$. As $\Tscr$ is torsion, its support is zero dimensional and hence $\Tscr \otimes \Bscr_{(i,j),(m,n)}$ has no higher cohomology. In particular 
\[ H^q(Y,\Fscr \otimes \Bscr_{(i,j),(m,n)}) \cong H^q(Y,\Lscr \otimes \Bscr_{(i,j),(m,n)})^{\oplus r} \]
Again using the fact that $Y$ is one dimensional, we only need to show $H^1(Y,\Lscr \otimes \Bscr_{(i,j),(m,n)})=0$ for sufficiently high $m,n$. By Riemann-Roch the latter follows when $\deg(\Lscr \otimes \Bscr_{(i,j),(m,n)}) \geq 2g-2$ where $g$ is the genus of $Y$. Finally this condition is satisfied for $m,n$ big enough as $\deg(\Bscr_{(i,j),(m,n)})$ is a strictly increasing in function of the variables $m$ and $n$.
\end{proof}

In particular \Cref{thm:Btildenoeth} follows from \Cref{prp:BtildeisBscr} and the following:

\begin{theorem} \label{thm:Bscrnoeth}
Let $\{ \Lscr_{(i,j)} \}_{i,j \in \mathbb{Z}}, \{ \Gscr_{(i,j)} \}_{i,j \in \mathbb{Z}}$ be an ample $\mathbb{Z}^2$ sequence on a Noetherian, projective scheme $Y$, then $B := B\left( \{ \Lscr_{(i,j)} \}_{i,j \in \mathbb{Z}}, \{ \Gscr_{(i,j)} \}_{i,j \in \mathbb{Z}} \right)$ is Noetherian in the sense that $\Gr(B)$ is a locally Noetherian category.
\end{theorem}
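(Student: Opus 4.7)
The strategy is to adapt the classical argument of Artin--Van den Bergh \cite[\S 3]{AV} for Noetherianity of twisted homogeneous coordinate rings from ample sequences to the $\mathbb{Z}^2$-graded setting. Since the family $\{e_{(i_0,j_0)}B\}_{(i_0,j_0) \in \mathbb{Z}^2}$ generates $\Gr(B)$, it suffices to show each $e_{(i_0,j_0)}B$ is Noetherian. Fix such a basepoint and consider an ascending chain of submodules $M_0 \subseteq M_1 \subseteq \cdots \subseteq e_{(i_0,j_0)}B$; by \eqref{eq:defBijmn} each $M_k$ is supported on the translated first quadrant $Q := \{(m,n) : m \geq i_0,\, n \geq j_0\}$.

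To each $M_k$ I associate, for every $(m,n) \in Q$, the coherent subsheaf
\[ \Mscr_{k,(m,n)} := \operatorname{Im}\bigl((M_k)_{(m,n)} \otimes_k \Oscr_Y \to \Bscr_{(i_0,j_0),(m,n)}\bigr) \subseteq \Bscr_{(i_0,j_0),(m,n)}. \]
The multiplicative structure of $\Bscr$ guarantees the compatibility $\Mscr_{k,(m,n)} \cdot \Bscr_{(m,n),(m',n')} \subseteq \Mscr_{k,(m',n')}$, and for each fixed $(m,n)$ the chain $(\Mscr_{k,(m,n)})_k$ of coherent subsheaves of $\Bscr_{(i_0,j_0),(m,n)}$ stabilizes by Noetherianity of $Y$.

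The heart of the argument is to promote this degree-wise stabilization to a uniform one via a single ``seed'' $(m_0,n_0) \in Q$. The claim is that for $(m_0,n_0)$ sufficiently large and some index $K$, one has for all $k \geq K$ the identification
\[ (M_k)_{(m,n)} = H^0\bigl(Y,\, \Mscr_{K,(m_0,n_0)} \cdot \Bscr_{(m_0,n_0),(m,n)}\bigr) \]
throughout an ``ampleness region'' $R \subseteq Q$. Applying \Cref{def:amplesequence} to the (finitely many) coherent sheaves $\Mscr_{k,(m_0,n_0)}$ with $k \leq K$ yields constants $I, J$ such that $R := \{(m,n) \in Q : m \geq I \text{ or } n \geq J\}$ is the complement in $Q$ of the finite rectangle $\{i_0 \leq m < I,\, j_0 \leq n < J\}$. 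The identification above rests on two ingredients: (a) a sheaf-theoretic finite-generation claim, asserting $\Mscr_{k,(m,n)} = \Mscr_{K,(m_0,n_0)} \cdot \Bscr_{(m_0,n_0),(m,n)}$ (up to torsion) for $k \geq K$ and $(m,n) \in R$; and (b) the cohomological vanishing from ampleness, which lifts the sheaf surjection $\Mscr_{K,(m_0,n_0)} \otimes_Y \Bscr_{(m_0,n_0),(m,n)} \twoheadrightarrow \Mscr_{K,(m_0,n_0)} \cdot \Bscr_{(m_0,n_0),(m,n)}$ to a surjection on global sections via the associated long exact sequence.

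Granting this core identification, stabilization of $(\Mscr_{k,(m_0,n_0)})_k$ forces stabilization of $(M_k)_{(m,n)}$ throughout $R$ for $k \geq K$. The complementary finite rectangle inside $Q$ contains only finitely many degrees, and in each, $(M_k)_{(m,n)}$ is an ascending chain of subspaces of the finite-dimensional space $B_{(i_0,j_0),(m,n)}$ and therefore stabilizes; taking the maximum of these finitely many stabilization indices together with $K$ yields the required global stabilization index. I expect the main obstacle to be claim (a), where the uniform capture of the subsheaves $\Mscr_{k,(m,n)}$ by a single seed uses ampleness in two independent directions --- here the cross-shaped geometry of the vanishing region in \Cref{def:amplesequence} (the complement of a \emph{finite rectangle} rather than of a single half-line) is precisely what allows the remainder region $Q \setminus R$ to be finite and makes the argument go through in $\mathbb{Z}^2$.
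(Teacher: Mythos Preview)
Your strategy has a genuine gap in the passage from the seed $(m_0,n_0)$ to the full region $R$. The region $R=\{(m,n)\in Q : m\ge I \text{ or } n\ge J\}$ contains two ``arms'' --- for instance $\{(m,n): m\ge I,\ j_0\le n < n_0\}$ --- that are \emph{not} dominated by $(m_0,n_0)$. For such $(m,n)$ one has $B_{(m_0,n_0),(m,n)}=0$ by \eqref{eq:defBijmn}, so $(M_k)_{(m,n)}$ cannot be reached from $(M_K)_{(m_0,n_0)}$ by right multiplication in $B$; your ingredient (b) is vacuous there, and nothing forces $(M_k)_{(m,n)}$ to coincide with $H^0$ of a sheaf twist. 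Concretely, take $Y=\mathbb P^1$ with all $\Lscr_{(i,j)}=\Gscr_{(i,j)}=\Oscr(1)$, fix $N\gg 0$, and let $M_k\subset e_{(0,0)}B$ be generated by $B_{(0,0),(0,1)}$ together with $t^{N-k}\in B_{(0,0),(N-k,0)}$. Then for any seed with $n_0\ge 1$ the sheaves $\Mscr_{k,(m_0,n_0)}=\Oscr(m_0+n_0)$ are constant in $k$ (so your method gives $K=0$), yet along the bottom arm $(M_k)_{(m,0)}=t^{N-k}\cdot H^0(\Oscr(m-N+k))$ strictly increases until $k=N$. Thus both the claimed identification and claim~(a) fail on the arms, and a single seed cannot do the job.

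The paper proceeds differently: rather than stabilising a chain it shows directly that every submodule $M\subset e_{(a,b)}B$ is finitely generated. One sheafifies via the functor $\widetilde{(-)}=-\otimes_B\Bscr$ of \Cref{lem:twopairfunctors} (producing a \emph{single} coherent $\Bscr$-module, not a degree-indexed family), observes that $\overline{M}_{(\ge a,\ge b)}$ is finitely generated (\Cref{lem:cohfingen}), and then deduces that $M$ itself is finitely generated from the short exact sequence $0\to M\to \overline{M}_{(\ge a,\ge b)}\to(\overline{M}/M)_{(\ge a,\ge b)}\to 0$, whose cokernel is torsion and bounded. The last step requires the non-obvious \Cref{lem:fingenSES}(2), whose proof reduces by induction to the Noetherianity of the $\mathbb Z$-algebras obtained by restricting $B$ to a single row or column --- precisely the extra ingredient needed to control the ``arms'' that a single interior seed cannot reach.
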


\begin{remark}
It was shown in \cite{chan} that, under suitable conditions, twisted bi-homogeneous coordinate rings are Noetherian. The $(\check{-})$-construction which turns graded algebras into $\mathbb{Z}$-algebras can be generalized to turn bi-homogeneous algebras into $\mathbb{Z}^2$-algebras, tri-homogeneous algebras into $\mathbb{Z}^3$-algebras, etc. Moreover it is not hard to see that in this way we can turn a twisted bi-homogeneous coordinate ring into a $\mathbb{Z}^2$-algebra of the form \eqref{eq:defBijmn}. As such \Cref{thm:Bscrnoeth} is a generalization of \cite[Theorem 5.2]{chan}.
\end{remark}

The proof of \Cref{thm:Bscrnoeth} follows from a chain of lemmas:

\begin{lemma} \label{lem:Bgenbysections}
Let $\Fscr$ be a coherent sheaf on $Y$, then for all $i,j \in \mathbb{Z}: \Fscr \otimes \Bscr_{(i,j),(m,n)}$ is generated by its global sections for sufficiently large $m,n$, in the sense that there exist $i_0, j_0 \in \mathbb{Z}$ such that $\Fscr \otimes_Y \Bscr_{(i,j),(m,n)}$ is generated by global sections whenever $m \geq i, n \geq j_0$ or $m \geq i_0, n \geq j$
\end{lemma}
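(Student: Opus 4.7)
The plan is to mimic the classical derivation that an ample line bundle eventually generates every coherent sheaf. Fix $\Fscr$ coherent on $Y$ and $(i,j) \in \mathbb{Z}^2$. For a closed point $y \in Y$ with ideal sheaf $\mathcal{I}_y$, the short exact sequence
$$0 \to \mathcal{I}_y \Fscr \to \Fscr \to \Fscr/\mathcal{I}_y\Fscr \to 0,$$
whose third term is the skyscraper $\Fscr_y/\mathfrak{m}_y \Fscr_y$ at $y$, stays exact upon tensoring with the (locally free) line bundle $\Bscr_{(i,j),(m,n)}$. The associated long exact cohomology sequence gives a surjection
$$H^0\bigl(Y, \Fscr \otimes \Bscr_{(i,j),(m,n)}\bigr) \twoheadrightarrow \bigl(\Fscr \otimes \Bscr_{(i,j),(m,n)}\bigr) \otimes k(y)$$
whenever $H^1\bigl(Y, \mathcal{I}_y \Fscr \otimes \Bscr_{(i,j),(m,n)}\bigr)$ vanishes. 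By Definition~\ref{def:amplesequence} applied to the coherent sheaf $\mathcal{I}_y \Fscr$ there exist $(i_0(y), j_0(y)) \in \mathbb{Z}^2$ for which this vanishing holds throughout the L-shape $m \geq i_0(y), n \geq j$ or $m \geq i, n \geq j_0(y)$. In this region $\Fscr \otimes \Bscr_{(i,j),(m,n)}$ is generated at the stalk at $y$, and by Nakayama's Lemma it is then generated on some Zariski open neighborhood $U_y$ of $y$.

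The main technical obstacle is producing a uniform L-shape bound $(i_0, j_0)$ independent of $y$. My plan is to first handle the auxiliary case $\Fscr = \mathcal{O}_Y$ via the same argument, obtaining pointwise global generation of the line bundles $\Bscr_{(i',j'),(m,n)}$ in L-shapes depending on the point. Noetherian quasi-compactness of $Y$ allows me to cover $Y$ by finitely many of the resulting opens $U_{y_l}$, and taking the coordinatewise maximum of the corresponding bounds yields a single L-shape throughout which $\Bscr_{(i',j'),(m,n)}$ is globally generated on all of $Y$; note that L-shapes of the specified form are stable under such coordinatewise maxima, since $m \geq \max_l i_0(y_l)$ forces $m \geq i_0(y_l)$ for every $l$.

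For general $\Fscr$ I would then exploit the multiplicativity inherited from the sheaf of algebras $\Bscr$. The algebra relation \eqref{eq:LscrGscr} yields a canonical isomorphism of line bundles $\Bscr_{(i,j),(m,n)} \cong \Bscr_{(i,j),(m_0,n_0)} \otimes_Y \Bscr_{(m_0,n_0),(m,n)}$ for any intermediate $(m_0, n_0)$. Thus if global sections generate $\Fscr \otimes \Bscr_{(i,j),(m_0,n_0)}$ at a point and $\Bscr_{(m_0,n_0),(m,n)}$ is globally generated everywhere, then products of the respective generators do the job for $\Fscr \otimes \Bscr_{(i,j),(m,n)}$ throughout an L-shape above $(m_0, n_0)$. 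Combining this propagation with the pointwise statement from the first paragraph, covering $Y$ by finitely many $U_{y_l}$ on which $\Fscr \otimes \Bscr_{(i,j),(m_0(y_l), n_0(y_l))}$ is generated, and taking coordinatewise maxima of all the bounds produces the required uniform $(i_0, j_0)$.
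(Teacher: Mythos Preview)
Your outline follows the same route as the Artin--Van den Bergh argument the paper invokes, and the overall strategy (surjection onto the fibre from $H^1$-vanishing, Nakayama, multiplicativity, quasi-compactness) is the right one. There are, however, two genuine gaps.

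First, the open neighbourhood $U_y$ that Nakayama produces depends on the particular index $(m,n)$, not only on $y$. In your treatment of the case $\Fscr = \Oscr_Y$ you therefore cannot simply ``cover $Y$ by finitely many of the resulting opens $U_{y_l}$'' and then intersect the L-shapes: for each $y_l$ you have generation on $U_{y_l}$ only at one specific $(m,n)$, not throughout the whole L-shape $L_{y_l}$. Your proposed remedy---propagating via multiplicativity with a globally generated $\Bscr_{(m_0,n_0),(m,n)}$---is exactly the $\Oscr_Y$ statement you are trying to establish, so the argument is circular as written. In \cite{AV} this uniformisation step is precisely what Lemma~3.3 supplies (a Noetherian argument on $Y$), and that is the ingredient your sketch is missing.

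Second, even granting the $\Oscr_Y$ case, propagation from a single basepoint $(m_0(y_l),n_0(y_l))$ only reaches pairs $(m,n)$ with $m\geq m_0(y_l)$ and $n\geq n_0(y_l)$; intersecting the resulting regions over $l$ yields generation on a quadrant, not on an L-shape with corner $(i,j)$. The arm $\{m=i,\ n\geq j_0\}$ is never reached as soon as some $m_0(y_l)>i$. The easy fix is to run the propagation twice: once with basepoints of the form $(i,\,n_0(y_l))$ on the vertical edge (these lie in the pointwise L-shape, so they are available) to capture the region $m\geq i,\ n\gg 0$, and once with basepoints $(m_0(y_l),\,j)$ on the horizontal edge for the region $n\geq j,\ m\gg 0$.
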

\begin{proof}
This is a straightforward generalization of \cite[Proposition 3.2.ii and Lemma 3.3]{AV}.
\end{proof}
\begin{lemma}\label{lem:twopairfunctors}
We have two pairs of functors between $\Gr(B)$, $\Gr(\Bscr)$ and $\Qcoh(Y)$:
\[
\begin{tikzpicture}
\node (b)at (-4,0) {$\Gr(B)$};
\node (c) at (0,0) {$\Gr(\Bscr)$};
\node (d) at (-2,1.2) {$\widetilde{(-)} := - \otimes_B \Bscr$};
\node (e) at (-2,-1.1) {$\Gamma_* := \bigoplus_{n \in \mathbb{Z}} \Gamma(Y,(-)_n)$};
\node (f) at (4,0) {$\Qcoh(Y)$};
\node (g) at (2,1.2) {$(-)_{(0,0)}$};
\node (h) at (2,-1.1) {$- \otimes_Y e_{(0,0)} \Bscr$};
\draw[->] (b) to[bend left](c) ;
\draw[->] (c) to[bend left] (b);
\draw[->] (c) to[bend left](f) ;
\draw[->] (f) to[bend left] (c);
\end{tikzpicture} \]
Moreover these satisfy the following properties:
\begin{enumerate}
\item $(-)_{(0,0)}$ and $- \otimes_Y e_{(0,0)} \Bscr$ are quasi-inverses and define an equivalence of categories
\item $\widetilde{(-)}$ is left adjoint to $\Gamma_*$
\item $\Gamma_*$ is exact modulo torsion modules
\item Let $M \in \Gr(B)$ and define $\overline{M} := \Gamma_*(\widetilde{M})$. Then there is a natural map $M \rightarrow \overline{M}$, moreover the kernel and cokernel of this map are torsion.
\item $\widetilde{(-)}$ is exact
\end{enumerate}
\end{lemma}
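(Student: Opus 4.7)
The plan is to dispatch the five claims in order. The key structural observation throughout is that every $\Bscr_{(i,j),(m,n)}$ is a line bundle and, by inspection of \eqref{eq:defBscr}, the multiplication $\Bscr_{(i,j),(m,n)} \otimes_Y \Bscr_{(m,n),(i,j)} \to \Bscr_{(i,j),(i,j)} = \Oscr_Y$ is an isomorphism (one checks this in the smallest case using the relation \eqref{eq:LscrGscr} and then proceeds by induction on $m - i$ and $n - j$). Thus $\Bscr$ behaves like a sheaf of ``graded-invertible'' $\mathbb{Z}^2$-algebras.

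For (1), this invertibility forces every structure map $\Mscr_{(i,j)} \otimes_Y \Bscr_{(i,j),(m,n)} \to \Mscr_{(m,n)}$ of a graded $\Bscr$-module $\Mscr$ to be an isomorphism (precompose with the alleged inverse obtained from the $(m,n) \to (i,j)$ structure map and use associativity with the local unit). Hence $\Mscr_{(m,n)} \cong \Mscr_{(0,0)} \otimes_Y \Bscr_{(0,0),(m,n)}$ canonically, which gives the desired equivalence. Claim (2) is then the standard tensor-hom adjunction for extension of scalars along the $\mathbb{Z}^2$-algebra map $B \to \Gamma_*(\Bscr)$. For (5), I would use (1) to reduce exactness of $\widetilde{(-)}$ to exactness of $M \mapsto (M \otimes_B \Bscr)_{(0,0)} = M \otimes_B \Bscr e_{(0,0)}$ as a functor into $\Qcoh(Y)$; via presentations, this in turn becomes a cokernel of a map between direct sums of the flat line bundles $\Bscr_{(a_\bullet,b_\bullet),(0,0)}$, and flatness together with exactness of direct sums yields the claim.

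Claim (3) is where the ample-sequence hypothesis enters. A short exact sequence $0 \to \Fscr \to \Gscr \to \Hscr \to 0$ in $\Gr(\Bscr)$ produces bidegree-wise long exact cohomology sequences, so the cokernel of $\Gamma_*(\Gscr) \to \Gamma_*(\Hscr)$ injects into $\bigoplus_{(i,j)} H^1(Y, \Fscr_{(i,j)})$. For coherent $\Fscr$, (1) identifies $\Fscr_{(i,j)} \cong \Fscr_{(0,0)} \otimes_Y \Bscr_{(0,0),(i,j)}$, and \Cref{def:amplesequence} makes these $H^1$'s vanish for $(i,j)$ sufficiently large; hence the cokernel is right-upper-bounded, hence torsion. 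The general quasicoherent case follows by writing $\Fscr$ as a filtered colimit of coherent $\Bscr$-submodules and using that cohomology commutes with filtered colimits on the Noetherian scheme $Y$.

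Claim (4), which I expect to be the main obstacle, I would handle by first checking the free case $M = e_{(a,b)} B$: then $\widetilde{M} = e_{(a,b)}\Bscr$ and the unit of (2) becomes the inclusion $B_{(a,b),(i,j)} \hookrightarrow \Gamma(Y, \Bscr_{(a,b),(i,j)})$. By \eqref{eq:defBijmn} this is the identity on the first quadrant $\{i \geq a,\, j \geq b\}$ and zero elsewhere, so the kernel vanishes and the cokernel is right-upper-bounded. For finitely generated $M$ I would take a two-term presentation by finite direct sums of such free modules and propagate the torsion control via a diagram chase using the exactness of $\widetilde{(-)}$ from (5) and the ``exact modulo torsion'' output from (3); the general case then follows by a filtered colimit. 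The delicate point is arranging this propagation without implicitly appealing to Noetherianity of $\Gr(B)$ (the statement of \Cref{thm:Bscrnoeth} itself), for which the crucial input is \Cref{lem:Bgenbysections}: it renders the degree-wise comparison $M \to \overline{M}$ explicit in sufficiently deep bidegrees and thereby lets one bound both kernel and cokernel directly.
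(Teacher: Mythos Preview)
Your proposal is correct and follows the same line as the paper, which simply defers each item to the corresponding lemma in \cite{AV}. One remark on (4): the circularity you flag does not actually arise, since the cokernel of $M\to\overline M$ is already controlled by (3), (5), and the free case applied only to $P_0$ (no information about $K$ is used there); applying this cokernel bound with $K=\ker(P_0\to M)$ in the role of $M$ then shows $\coker(K\to\overline K)$ is torsion, into which $\ker(M\to\overline M)$ embeds via the snake lemma, so \Cref{lem:Bgenbysections} is not needed as a separate input for this step.
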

\begin{proof}
\begin{enumerate}
\item This is standard and follows from the fact that $\Bscr$ is strongly graded (i.e. $\Bscr_{(a,b),(i,j)}\Bscr_{(i,j),(m,n)} = \Bscr_{(a,b),(m,n)}$ holds for all $a,b,i,j,m,n \in \mathbb{Z}$).
\item The functor $\widetilde{(-)}$ is defined by considering $B$ as well as each graded $B$-module as constant sheaves on $Y$. As such $\Bscr$ obtains a natural left $B$-structure. The fact that $\widetilde{(-)}$ is left adjoint to $\Gamma_*$ follows immediately from this construction.
\item Analogous to \cite[Lemma 3.7.(ii)]{AV}
\item The existence of the natural map $M \rightarrow \overline{M}$ follows from the adjunction in (2). The fact that the kernel and cokernel are torion are a straightforward generalization of  \cite[Lemma 3.13.(i) and (iii)]{AV}
\item Analogous to \cite[Lemma 3.13.(iv)]{AV} (also using \cite[Lemma 3.7.(iii)]{AV})
\end{enumerate}
\end{proof}
\begin{remark} \label{rem:def:coherent}
The above lemma implies that up to isomorphism any graded $\Bscr$-module $\Mscr$ can be written as $\Fscr \otimes_Y e_{(0,0)} \Bscr$ for some quasi-coherent sheaf $\Fscr$. In the special case that $\Fscr$ is coherent, $\Mscr$ is called coherent as well.
\end{remark}

\begin{corollary} \label{cor:coherentshifts}
Let $\Mscr$ be a coherent $\Bscr$-module. Then there exist $i,j,n \in \mathbb{Z}$, $n \geq 0$ such that $\Mscr$ is a quotient of $\left( e_{(i,j)}\Bscr \right)^{\oplus n}$
\end{corollary}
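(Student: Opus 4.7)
My plan is to reduce the problem, via the equivalence of Lemma \ref{lem:twopairfunctors}(1), to finding enough global sections of a single shifted component of $\Mscr$, and then use strong gradedness of $\Bscr$ to upgrade surjectivity in one bidegree to surjectivity in every bidegree.

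Concretely: by Lemma \ref{lem:twopairfunctors}(1) and Remark \ref{rem:def:coherent}, I write $\Mscr \cong \Fscr \otimes_Y e_{(0,0)} \Bscr$ for some coherent sheaf $\Fscr$ on $Y$, so that $\Mscr_{(m,n)} = \Fscr \otimes_Y \Bscr_{(0,0),(m,n)}$ as sheaves on $Y$. Applying Lemma \ref{lem:Bgenbysections} to $\Fscr$ at base degree $(0,0)$, I can pick $(i,j) \in \mathbb{Z}^2$ large enough that $\Mscr_{(i,j)}$ is generated by its global sections; since $Y$ is Noetherian and $\Mscr_{(i,j)}$ is coherent, finitely many sections $s_1,\dots,s_n \in H^0(Y, \Mscr_{(i,j)})$ already suffice. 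Using $\Bscr_{(i,j),(i,j)} = \Oscr_Y$ one has $\Hom_\Bscr(e_{(i,j)}\Bscr, \Mscr) = H^0(Y, \Mscr_{(i,j)})$, so the $s_k$ assemble into a morphism $\phi : (e_{(i,j)}\Bscr)^{\oplus n} \to \Mscr$ of graded $\Bscr$-modules which is surjective in bidegree $(i,j)$ by construction.

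To promote this to surjectivity in every bidegree I invoke the fact already used to establish Lemma \ref{lem:twopairfunctors}(1) that $\Bscr$ is strongly graded, with each $\Bscr_{(a,b),(c,d)}$ an invertible $\Oscr_Y$-module. Hence the cokernel $\Cscr := \coker(\phi)$ satisfies $\Cscr_{(m,n)} \cong \Cscr_{(i,j)} \otimes_Y \Bscr_{(i,j),(m,n)}$ in every bidegree; since $\Cscr_{(i,j)} = 0$ and $\Bscr_{(i,j),(m,n)}$ is invertible, $\Cscr_{(m,n)} = 0$ for all $(m,n)$. Equivalently, under the equivalence $(-)_{(0,0)} : \Gr(\Bscr) \to \Qcoh(Y)$ the cokernel $\Cscr$ corresponds to $0$, so $\phi$ is surjective and $\Mscr$ is realized as a quotient of $(e_{(i,j)}\Bscr)^{\oplus n}$.

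The proof is essentially bookkeeping: the substantive inputs are Lemma \ref{lem:Bgenbysections} (which is where ampleness of the $\mathbb{Z}^2$-sequence enters) and strong gradedness of $\Bscr$ (already encoded in Lemma \ref{lem:twopairfunctors}(1)). I do not expect any step to present a genuine obstacle.
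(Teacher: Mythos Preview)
Your proof is correct and follows essentially the same route as the paper's: write $\Mscr \cong \Fscr \otimes_Y e_{(0,0)}\Bscr$, use \Cref{lem:Bgenbysections} to find $(i,j)$ with $\Mscr_{(i,j)}$ globally generated, and take the evident map from copies of $e_{(i,j)}\Bscr$. The only difference is that the paper stops at ``$\Mscr$ is a quotient of $\Gamma(Y,\Fscr \otimes \Bscr_{(0,0),(i,j)}) \otimes_k e_{(i,j)}\Bscr$'' without justifying why surjectivity in bidegree $(i,j)$ propagates to all bidegrees, whereas you make this step explicit via strong gradedness of $\Bscr$; that extra paragraph is a welcome clarification rather than a different argument.
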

\begin{proof}
There is a coherent sheaf $\Fscr$ such that $\Mscr = \Fscr \otimes_Y e_{(0,0)} \Bscr$. By \Cref{lem:Bgenbysections} there are $i,j \in \mathbb{Z}$ such that $\Fscr \otimes \Bscr_{(0,0),(i,j)}$ is generated by global sections. As such $\Mscr$ is a quotient of $\Gamma(Y,\Fscr \otimes \Bscr_{(0,0),(i,j)}) \otimes_k e_{(i,j)} \Bscr$
\end{proof}

\begin{lemma} \label{lem:radfingen}
For each $i,j \in \mathbb{Z}, \forall a,b \in \mathbb{Z} \cup \{ - \infty \}$ we have that $\left(e_{(i,j)}B\right)_{(\geq a, \geq b)}$ is finitely generated. (Recall: the notation $M_{\geq a, \geq b}$ was introduced in \eqref{eq:geqageqb})
\end{lemma}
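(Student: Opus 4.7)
\medskip
\noindent\textbf{Proof plan.}
The plan is to combine the sheafification machinery of \Cref{lem:twopairfunctors} with noetherianity of suitable restricted $\mathbb{Z}$-sub-algebras of $B$, handling the ``upper-right corner'' and the leftover ``L-shape'' separately. First I would reduce to the case $a \geq i$ and $b \geq j$, since $e_{(i,j)}B$ is supported in degrees $(m,n)$ with $m \geq i, n \geq j$. Write $M = (e_{(i,j)}B)_{(\geq a, \geq b)}$.

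The first main step is to verify that $\widetilde{M}$ is a coherent $\Bscr$-module. By exactness of $\widetilde{(-)}$ (\Cref{lem:twopairfunctors}(5)), $\widetilde{M}$ embeds into $\widetilde{e_{(i,j)}B} = e_{(i,j)}\Bscr$, and under the equivalence $(-)_{(0,0)}\colon \Gr(\Bscr) \simeq \Qcoh(Y)$ this corresponds to a subsheaf of the line bundle $\Bscr_{(i,j),(0,0)}$. Noetherianity of $Y$ then forces coherence in the sense of \Cref{rem:def:coherent}. Applying \Cref{cor:coherentshifts} (and enlarging $(i_0, j_0)$ via \Cref{lem:Bgenbysections} so that $(i_0, j_0) \geq (a, b)$) produces a surjection $(e_{(i_0, j_0)}\Bscr)^{\oplus N} \twoheadrightarrow \widetilde{M}$ cut out by sections $s_1, \ldots, s_N \in \Gamma(Y, \widetilde{M}_{(i_0, j_0)})$. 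Because $(i_0, j_0) \geq (a, b)$ one has $\Gamma(Y, \widetilde{M}_{(i_0, j_0)}) \subseteq \Gamma(Y, \Bscr_{(i,j),(i_0, j_0)}) = M_{(i_0, j_0)}$, so these sections lift to honest elements of $M$, and I would let $N_0 \subseteq M$ be the sub-$B$-module they generate. A further appeal to the bidirectional vanishing of \Cref{def:amplesequence}, applied to the kernel of this presentation (together with a Castelnuovo--Mumford style surjectivity statement for multiplication of global sections of line bundles), then yields $(i_1, j_1) \geq (i_0, j_0)$ with $(N_0)_{(m,n)} = M_{(m,n)}$ for every $(m,n) \geq (i_1, j_1)$, using also that $M_{(m,n)} = \overline{M}_{(m,n)}$ throughout the support of $M$.

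The quotient $M/N_0$ is then supported inside the L-shape $\{(m,n)\colon a \leq m < i_1,\ n \geq b\} \cup \{(m,n)\colon m \geq a,\ b \leq n < j_1\}$, which I would cover row-by-row and column-by-column. For each of the finitely many $m \in [a, i_1)$, the row $M^{(m)} := \bigoplus_{n \geq b} M_{(m,n)}$ is a graded right module over the $\mathbb{Z}$-sub-algebra $B^{(m)} := \bigoplus_{n,n'} B_{(m,n),(m,n')}$ of $B$; the restriction $\{\Gscr_{(m,n)}\}_n$ of our $\mathbb{Z}^2$-sequence to fixed first coordinate is itself an ample $\mathbb{Z}$-sequence (immediate from \Cref{def:amplesequence} by specialising to the horizontal direction), so the $\mathbb{Z}$-algebra version of \cite[Theorem 1.4]{AV} (cf.\ also \cite{Polishchuk}) ensures $B^{(m)}$ is noetherian and $M^{(m)}$ is finitely generated over it as the sections module of the coherent sheaf $\Bscr_{(i,j),(m,b)}$ along this $\mathbb{Z}$-sequence. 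An analogous argument treats the finitely many columns $n \in [b, j_1)$. Concatenating these finite generator sets with those of $N_0$ yields the required finite generating set for $M$.

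The main obstacle will be the passage from the sheaf-level surjection $\widetilde{N_0} \twoheadrightarrow \widetilde{M}$ to an actual equality $(N_0)_{(m,n)} = M_{(m,n)}$ in high bidegrees, since this requires combining global generation with $H^1$-vanishing for the kernel in both the horizontal and vertical directions simultaneously, and one has to track the two thresholds produced by \Cref{def:amplesequence} carefully. The secondary subtlety -- the infinite extent of the L-shape left over after the corner is covered -- dissolves cleanly upon the observation that restricting an ample $\mathbb{Z}^2$-sequence to a single row or column yields an ample $\mathbb{Z}$-sequence, so that each of the finitely many strips reduces to the already-understood one-parameter theory.
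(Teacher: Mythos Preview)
Your argument is correct, but it takes a considerably more elaborate route than the paper's. The paper avoids both the sheafification machinery $\widetilde{(-)}, \Gamma_*$ and the appeal to noetherianity of the row/column $\mathbb{Z}$-sub-algebras. After reducing to $i=j=0$ and $a,b\geq 0$, it simply picks $m_0\geq a$ with $\Bscr_{(0,0),(m_0,b)}$ globally generated, and uses the resulting short exact sequence together with the $H^1$-vanishing from \Cref{def:amplesequence} to see that $B_{(0,0),(m_0,b)}\otimes e_{(m_0,b)}B \to (e_{(0,0)}B)_{(\geq a,\geq b)}$ is surjective in all degrees $(m,n)$ with $m\geq m_1,\ n\geq b$; a symmetric choice at $(a,n_0)$ covers $m\geq a,\ n\geq n_1$. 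The union of these two half-strips leaves only the \emph{finite} box $[a,m_1)\times[b,n_1)$, whose homogeneous pieces are thrown in by hand. The point you miss is that by placing generators on the boundary lines $\{n=b\}$ and $\{m=a\}$ rather than at a single interior bidegree $(i_0,j_0)$, no infinite L-shape is ever left over, so the one-parameter theory is unnecessary. What your approach buys in exchange is a transparent link to the later \Cref{lem:cohfingen}: you are in effect proving that lemma in the special case where $M_{(m,n)}=\overline{M}_{(m,n)}$ on the support of $M$, and your handling of the L-shape via restriction to ample $\mathbb{Z}$-sequences is an independently useful observation.
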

\begin{proof} (inspired by \cite[p.261-262]{AV})\\
Without loss of generality we assume $i=j=0$. Moreover replacing $a,b$ by $\max(a,0)$, $\max(b,0)$ we can assume $a,b \geq 0$.
By \Cref{lem:Bgenbysections} there is an $m_0 \geq a$ such that $\Bscr_{(0,0),(m_0,b)}$ is generated by sections. I.e. there is a short exact sequence
\[ 0 \rightarrow \Fscr \rightarrow B_{(0,0),(m_0,b)} \otimes_k \Oscr_Y \rightarrow \Bscr_{(0,0),(m_0,b)} \rightarrow 0\]
As $\{ \Lscr_{(i,j)} \}_{i,j \in \mathbb{Z}}, \{ \Gscr_{(i,j)} \}_{i,j \in \mathbb{Z}}$ is an ample sequence (see \Cref{def:amplesequence}) there is $m_1 > m_0$ such that $H^1(Y,\Fscr \otimes_Y \Bscr_{(m_0,b),(m,n)})=0$ for all $m \geq m_1$ and $n \geq b$. In particular, applying $\Gamma_*$ to the above sequence provides a surjective morphism for all $m \geq m_1$ and $n \geq b$
\[ B_{(0,0),(m_0,b)} \otimes_k B_{(m_0,b),(m,n)} \twoheadrightarrow B_{(0,0),(m,n)} \]
I.e. the natural map
\[ B_{(0,0),(m_0,b)} \otimes_k e_{(m_0,b)} B \rightarrow \left(e_{(0,0)}B\right)_{(\geq a, \geq b)} \]
is surjective in degrees $(m,n)$ with $m \geq m_1$ and $n \geq b$. Similarly there is a $n_0\geq b$ and $n_1 > n_0$ such that
\[ B_{(0,0),(a,n_0)} \otimes_k e_{(a,n_0)} B \rightarrow \left(e_{(0,0)}B\right)_{(\geq a, \geq b)} \]
is surjective in degrees $(m,n)$ with $m \geq a$ and $n \geq n_0$. Combining the above two morphisms we find a surjective morphism
\[ \bigoplus_{\substack{a \leq m < m_1 \\ b \leq n < n_1 }} B_{(0,0),(m,n)} \otimes_k e_{(m,n)}B \twoheadrightarrow \left(e_{(0,0)}B\right)_{(\geq a, \geq b)} \]
As $\dim_k(B_{(0,0),(m,n)}) < \infty$ for all $m,n$ this finishes the proof.
\end{proof}
\begin{lemma} \label{lem:fingenSES}
Let $0 \rightarrow M \rightarrow M' \rightarrow M'' \rightarrow 0$ be an exact sequence in $\Gr(B)$.
\begin{enumerate}
\item If $M$ and $M''$ are finitely generated, then so is $M'$.
\item If $M'$ is finitely generated and there exist $a,b,c,d \in \mathbb{Z}$ such that $M''_{(i,j)}=0$ if $i \geq c$ and $j \geq d$ or if $i < a$ or if $j<b$ (i.e. there exists a $N \in \Gr(B)$ such that $M'' = N_{(\geq a, \geq b)}/N_{(\geq c, \geq d)}$), then $M$ is finitely generated.
\end{enumerate}
\end{lemma}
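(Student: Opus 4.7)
Part (1) is the standard extension argument: lift a finite generating set of $M''$ to elements of $M'$ and combine with (the images of) a finite generating set of $M$; these together generate $M'$. The hypothesis on $M''$ in Part (2) forces $M = M'$ in every degree outside the $L$-shaped region
\[ L := \{(i,j) : i \ge a,\ j \ge b\} \setminus \{(i,j) : i \ge c,\ j \ge d\}, \]
and $L$ decomposes as a finite union of rows $\{j=j_0\}$ with $b \le j_0 < d$ together with columns $\{i=i_0\}$ with $a \le i_0 < c$ (each infinite in one direction but bounded in the other). The plan is to exhibit an explicit finite $B$-generating set for $M$.

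Pick generators $m'_1,\dots,m'_n$ of $M'$ in degrees $(i_k,j_k)$, and claim that $M$ is $B$-generated by:
\begin{enumerate}
\item[(a)] those $m'_k$ with $i_k<a$ or $j_k<b$, which lie in $M$ since $M=M'$ in every such degree;
\item[(b)] a finite generating set for the truncation $M_{(\ge c,\ge d)}=M'_{(\ge c,\ge d)}$, finitely generated since it is a quotient of $\bigoplus_k (e_{(i_k,j_k)}B)_{(\ge c,\ge d)}$, whose summands are each finitely generated by Lemma \ref{lem:radfingen};
\item[(c)] for each $i_0\in[a,c)$, a finite generating set of the column $M_{\mathrm{col}(i_0)}:=\bigoplus_j M_{(i_0,j)}$ as a module over the column $\mathbb{Z}$-algebra $D_{i_0}:=\bigoplus_{j,j'} B_{(i_0,j),(i_0,j')}$;
\item[(d)] symmetrically, for each $j_0\in[b,d)$, a finite generating set of $M_{\mathrm{row}(j_0)}$ over the row $\mathbb{Z}$-algebra $C_{j_0}:=\bigoplus_{i,i'} B_{(i,j_0),(i',j_0)}$.
\end{enumerate}

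To verify that (a)--(d) generate $M$: for $x \in M_{(p,q)}$ write $x=\sum_k m'_k b_k$ in $M'$ and split off the terms with $i_k<a$ or $j_k<b$ as $y$. Then $y$ lies in the submodule generated by (a) (hence in $M$), so $z:=x-y\in M$ is a sum of terms starting from generators with $i_k\ge a,\ j_k\ge b$. In particular the degree $(p,q)$ of $z$ satisfies $p\ge a,\ q\ge b$, and depending on whether $(p,q)$ lies in $\{i\ge c,\ j\ge d\}$, the vertical strip $\{a\le i<c,\ j\ge b\}$, or the horizontal strip $\{i\ge a,\ b\le j<d\}$ of $L$, the element $z$ is absorbed by the submodule generated by (b), (c), or (d) respectively (degenerate cases $p<a$ or $q<b$ force $z=0$, so $x=y$ is already in the submodule generated by (a)).

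The main obstacle is justifying (c) and (d). For this, one observes that the restricted sequences $\{\Lscr_{(i,j_0)}\}_{i\in\mathbb{Z}}$ and $\{\Gscr_{(i_0,j)}\}_{j\in\mathbb{Z}}$ are ample $\mathbb{Z}$-sequences on $Y$: specialising Definition \ref{def:amplesequence} to $i=m=i_0$ (respectively $j=n=j_0$) yields exactly the $\mathbb{Z}$-ample condition for the column (respectively row) sequence. Hence the associated $\mathbb{Z}$-algebras $D_{i_0}$ and $C_{j_0}$ are Noetherian by the $\mathbb{Z}$-algebra analogue of \cite[Theorem 1.4]{AV}. Then $M_{\mathrm{col}(i_0)} \subseteq M'_{\mathrm{col}(i_0)}$ is finitely generated over $D_{i_0}$ provided $M'_{\mathrm{col}(i_0)}$ is, and this last is obtained by lifting each generator $m'_k$ with $i_k\le i_0$ into column $i_0$ via a basis of $B_{(i_k,j_k),(i_0,j_k)}$, using Lemma \ref{lem:Bgenbysections} to ensure that the multiplication $B_{(i_k,j_k),(i_0,j_k)} \otimes B_{(i_0,j_k),(i_0,j)} \to B_{(i_k,j_k),(i_0,j)}$ is surjective for $j$ past some threshold and adjoining finitely many additional generators in the bounded low-degree range (where finite-dimensionality of each $B$-piece suffices).
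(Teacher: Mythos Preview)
Your overall strategy for Part~(2) is sound and uses the same two ingredients as the paper: Lemma~\ref{lem:radfingen} to handle the quadrant $M_{(\ge c,\ge d)}=M'_{(\ge c,\ge d)}$, and Noetherianity of the column and row $\mathbb{Z}$-algebras $D_{i_0}$, $C_{j_0}$ (the paper invokes exactly this, citing \cite{AV} and \cite{Polishchuk}). The only structural difference is that the paper first reduces by induction to a single column ($c=a+1$, $d=b$) or a single row, whereas you treat all columns and rows of the $L$-shape at once; this is a harmless reorganisation of the same argument.

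There is, however, a gap in your justification that $M'_{\mathrm{col}(i_0)}$ is finitely generated over $D_{i_0}$. You lift each $m'_k$ with $i_k\le i_0$ via a basis of $B_{(i_k,j_k),(i_0,j_k)}$ and then assert that Lemma~\ref{lem:Bgenbysections} makes the multiplication
\[
B_{(i_k,j_k),(i_0,j_k)}\otimes B_{(i_0,j_k),(i_0,j)}\longrightarrow B_{(i_k,j_k),(i_0,j)}
\]
surjective for large $j$. But Lemma~\ref{lem:Bgenbysections} gives only global generation of sheaves, not surjectivity of multiplication of sections; and nothing in the hypotheses forces the fixed line bundle $\Bscr_{(i_k,j_k),(i_0,j_k)}$ to be globally generated---its space of sections could even be zero, in which case your lifted elements contribute nothing and the cokernel is not bounded in $j$. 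The easy repair is to lift instead at a higher row $j_k'\ge j_k$ for which $\Bscr_{(i_k,j_k),(i_0,j_k')}$ \emph{is} globally generated (such $j_k'$ exists by Lemma~\ref{lem:Bgenbysections} with $m=i_0\ge i_k$), and then run the $H^1$-vanishing argument as in the proof of Lemma~\ref{lem:radfingen}. Cleaner still---and this is what the paper does---is to apply Lemma~\ref{lem:radfingen} directly to each $(e_{(i_k,j_k)}B)_{(\ge i_0,\ge b)}$: among the resulting finite set of $B$-generators, those with first coordinate equal to $i_0$ generate the column over $D_{i_0}$.
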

\begin{proof}
\begin{enumerate}
\item This is standard.
\item By induction it suffices to show the lemma in case $c=a, d=b+1$ or $c=a+1, d=b$. Without los of generality we may assume the latter, i.e. $M''_{(i,j)}=0$ if $i\neq a$ or $j < b$. As $M'$ is finitely generated there exists as surjective morphism
\begin{equation} \label{eq:M'fingen} \bigoplus_{n=1}^N e_{(i_n,j_n)} B \twoheadrightarrow M' \end{equation}
For each $n$, consider the composition
\[ e_{(i_n,j_n)} B \rightarrow M'' \]
As this is a morphism of $B$-modules, it can only be nonzero if $i_n =a, j_n \geq b$. Moreover for each such $n$, $\left( e_{(a,j_n)} B \right)_{(\geq a+1, \geq j_n)}$ is finitely generated by \Cref{lem:radfingen}, say
\begin{equation}
\bigoplus_{x_n=1}^{x_{n,0}} e_{(i_{x_n},j_{x_n})}B \twoheadrightarrow 
\left( e_{(a,j_n)} B \right)_{(\geq a+1, \geq j_n)}
\end{equation}
Combining the above we obtain a morphism
\begin{equation} \label{eq:surjectivenotonline}
\left( \bigoplus_{\substack{n=1 \\ i_n \neq a \textrm{ or } j_n < b}}^N e_{(i_n,j_n)} B \right) \oplus \left( \bigoplus_{\substack{n=1 \\ i_n =a, j_n \geq b}}^N \left( \bigoplus_{x_n=1}^{x_{n,0}} e_{(i_{x_n},j_{x_n})}B \right) \right) \rightarrow M 
\end{equation}
and by construction this morphism is surjective in all degrees $(i,j)$ for which $i \neq a$ or $j < b$.

Now consider the $\mathbb{Z}$-algebra $B'$ defined by 
\[ B'_{n,m} := B_{(a,b+n),(a,b+m)} \]
By construction $B'_{n,m}$ is the twisted homogeneous coordinate ring with respect to the ample sequence $\left( \Gscr_{(a,b+n)} \right) _{n \in \mathbb{Z}}$. It is standard that $B'$ is a Noetherian $\mathbb{Z}$-algebra (see for example \cite{AV} or \cite{Polishchuk}). Moreover define $B'$-modules $L$ and $L'$ by
\[ L_m := M_{(a,b+m)} \textrm{ and } L'_m = M'_{(a,b+m)} \]
Recall that $M'$ was finitely generated as in \eqref{eq:M'fingen}. For each $n$ we know that $\left( e_{(i_n,j_n)} B \right)_{(\geq a, \geq b)}$ is finitely generated, say
\[ \bigoplus_{u_n=1}^{u_{n,0}} e_{(i_{u_n},j_{u_n})}B \twoheadrightarrow \left( e_{(i_n,j_n)} B \right)_{(\geq a, \geq b)}\]
As such we obtain that $L'$ is finitely generated. For the interested reader we mention that the explicit surjective morphism is given by
\[ \bigoplus_{n=1}^N \left( \bigoplus_{\substack{u_n=1 \\ i_{u_n} =a, j_{u_n} \geq b}}^{u_{n,0}} e_{j_{u_n}-b}B'\right) \twoheadrightarrow L' \]
As $B'$ is Noetherian, $L \subset L'$ is finitely generated as well, say 
\[ \bigoplus_{v=1}^{v_0} e_{j_v} B' \twoheadrightarrow L \]
The induced morphism
\begin{equation}
\label{eq:surjectiveonline}
\bigoplus_{v=1}^{v_0} e_{(a,b+j_v)} B \twoheadrightarrow M
\end{equation}
is surjective in all degrees $(i,j)$ with $i =a, j \geq b$. Combining \eqref{eq:surjectivenotonline} and \eqref{eq:surjectiveonline} we obtain that $M$ is finitely generated as required.
\end{enumerate}
\end{proof}
The following lemma will be crucial in the proof of \Cref{thm:Bscrnoeth}:
\begin{lemma} \label{lem:cohfingen}
Let $M \in \Gr(B)$ be such that $\widetilde{M}$ is coherent, then for each $a,b \in \mathbb{Z}$: $\overline{M}_{(\geq a, \geq b)}$ is a finitely generated graded $B$-module.
\end{lemma}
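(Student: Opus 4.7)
The plan is to mimic the proof of \Cref{lem:radfingen} using that, by \Cref{rem:def:coherent} and the equivalence in \Cref{lem:twopairfunctors}(1), $\widetilde{M}$ may be written as $\Fscr \otimes_Y e_{(0,0)}\Bscr$ for some coherent sheaf $\Fscr$ on $Y$. Then $\overline{M}_{(m,n)} = \Gamma(Y,\Fscr \otimes_Y \Bscr_{(0,0),(m,n)})$, which is finite dimensional over $k$ since $Y$ is projective and the relevant sheaf is coherent.

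I will first apply \Cref{lem:Bgenbysections} (to a suitable twist of $\Fscr$) to produce integers $m_0 \geq a$ and $n_0 \geq b$ such that both $\Fscr \otimes_Y \Bscr_{(0,0),(m_0,b)}$ and $\Fscr \otimes_Y \Bscr_{(0,0),(a,n_0)}$ are globally generated. Writing $\Kscr$ (resp.\ $\Kscr'$) for the kernels of these two evaluation surjections yields short exact sequences
\[ 0 \to \Kscr \to \overline{M}_{(m_0,b)} \otimes_k \Oscr_Y \to \Fscr \otimes_Y \Bscr_{(0,0),(m_0,b)} \to 0 \]
and an analogous one for $\Kscr'$. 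Invoking ampleness (\Cref{def:amplesequence}) on $\Kscr$ and $\Kscr'$ produces $m_1 \geq m_0$ and $n_1 \geq n_0$ with $H^1(Y,\Kscr \otimes_Y \Bscr_{(m_0,b),(m,n)}) = 0$ for all $m \geq m_1, n \geq b$, and $H^1(Y,\Kscr' \otimes_Y \Bscr_{(a,n_0),(m,n)}) = 0$ for all $m \geq a, n \geq n_1$. Tensoring each short exact sequence with the appropriate sheaf $\Bscr_{(\cdot),(m,n)}$, using the strong-grading isomorphism $\Bscr_{(0,0),(m_0,b)} \otimes_Y \Bscr_{(m_0,b),(m,n)} \cong \Bscr_{(0,0),(m,n)}$, and then taking global sections gives surjections
\[ \overline{M}_{(m_0,b)} \otimes_k B_{(m_0,b),(m,n)} \twoheadrightarrow \overline{M}_{(m,n)} \qquad (m \geq m_1,\ n \geq b) \]
and the analogous statement for $m \geq a, n \geq n_1$.

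These assemble into $B$-module morphisms $\overline{M}_{(m_0,b)} \otimes_k e_{(m_0,b)}B \to \overline{M}_{(\geq a, \geq b)}$ and $\overline{M}_{(a,n_0)} \otimes_k e_{(a,n_0)}B \to \overline{M}_{(\geq a, \geq b)}$ whose combined image covers every degree $(m,n)$ outside the finite box $\{(m,n) : a \leq m < m_1,\ b \leq n < n_1\}$. Since each $\overline{M}_{(m,n)}$ in this box is finite dimensional over $k$, adding finitely many degree-$(m,n)$ generators there produces a surjective $B$-module morphism from a finite direct sum of shifts $e_{(i,j)}B$ onto $\overline{M}_{(\geq a, \geq b)}$, proving finite generation. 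The main technical point is the simultaneous use of ampleness along both coordinate axes; this is exactly the two-directional clause built into \Cref{def:amplesequence}, and it is what promotes the argument of \Cref{lem:radfingen} (which handled only modules of the shape $\widetilde{e_{(i,j)}B}$) to arbitrary coherent $\widetilde{M}$.
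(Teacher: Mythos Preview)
Your argument is correct, and it takes a genuinely different route from the paper's proof. The paper argues indirectly: it invokes \Cref{cor:coherentshifts} to present $\widetilde{M}$ as a quotient of some $(e_{(i,j)}\Bscr)^{\oplus n}$, takes $\Gamma_*$ of the resulting short exact sequence to obtain
\[
0 \to \coker(f)_{(\geq a,\geq b)} \to \overline{M}_{(\geq a,\geq b)} \to H^1(\Kscr)_{(\geq a,\geq b)},
\]
and then appeals to \Cref{lem:radfingen} (for the left term) together with the extension/closure lemma \Cref{lem:fingenSES} to conclude. Your approach instead reruns the proof of \Cref{lem:radfingen} verbatim with the coherent sheaf $\Fscr$ in place of $\Oscr_Y$, using the identification $\overline{M}_{(m,n)} = \Gamma(Y,\Fscr\otimes_Y\Bscr_{(0,0),(m,n)})$ and the strong grading of $\Bscr$.

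What each approach buys: your argument is more direct and in particular bypasses \Cref{lem:fingenSES} entirely (whose proof is the most involved step in this chain, relying on Noetherianity of the auxiliary $\mathbb{Z}$-algebras $B'$). It also makes transparent that \Cref{lem:radfingen} is just the special case $\Fscr=\Oscr_Y$ of the same mechanism. The paper's route is more modular---it reuses \Cref{lem:radfingen} as a black box rather than reproving it---and the resolution by copies of $e_{(i,j)}\Bscr$ via \Cref{cor:coherentshifts} is perhaps a more conceptual starting point. But in terms of total work, your version is shorter.
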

\begin{proof} (inspired by \cite[Lemma 3.17]{AV})
As $\tilde{M}$ is coherent, \Cref{cor:coherentshifts} provides us with $i,j,n \in \mathbb{Z}$, $n \geq 0$ and a surjective morphism
\[ \left( e_{(i,j)}B \right)^{\oplus n} \twoheadrightarrow \tilde{M} \]
Let $\Kscr$ be the kernel of this morphism. Then we have a long exact sequence
\[ 0 \rightarrow \Gamma_*(\Kscr) \overset{f}{\rightarrow} e_{(i,j)}B^{\oplus n} \rightarrow \overline{M} \rightarrow H^1(\Kscr) \]
Truncation turns this into an exact sequence
\[ 0 \rightarrow \coker(f)_{(\geq a, \geq b)} \rightarrow \overline{M}_{(\geq a, \geq b)} \rightarrow H^1(\Kscr)_{(\geq a, \geq b)} \]
$\coker(f)_{(\geq a, \geq b)}$ is finitely generated as a quotient of $e_{(i,j)}B^{\oplus n}_{(\geq a, \geq b)}$, which in turn is finitely generated by \Cref{lem:radfingen}. $\Kscr$ is coherent as a $\Bscr$-submodule of $e_{(i,j)}\Bscr^{\oplus n}$, hence by the definition of an ample sequence, $H^1(\Kscr)_{(\geq a, \geq b)}$ is concentrated in finitely many degrees. As $Y$ is projective, $H^1(\Kscr)_{(\geq a, \geq b)}$ is finite dimensional and in particular finitely generated. The result now follows from \Cref{lem:fingenSES}.
\end{proof}

We can now finish the proof of the main theorem of this section
\begin{proof}[Proof of \Cref{thm:Bscrnoeth}]
As $\{ e_{(i,j)}B \mid i,j \in \mathbb{Z} \}$ obviously serves as a set of generators for $\Gr(B)$ it suffices to show that these modules are Noetherian. Hence let $M$ be a submodule of some $e_{(a,b)}B$. By \Cref{lem:twopairfunctors}(5) this induces an embedding
\[ \tilde{M} \hookrightarrow e_{(a,b)} \Bscr \]
As such $\tilde{M}$ is coherent and \Cref{lem:cohfingen} implies $\overline{M}_{(\geq a, \geq b)}$ is a finitely generated module. As $M = M_{(\geq a, \geq b)}$ the natural map $M \rightarrow \overline{M}$ factors through $\overline{M}_{(\geq a, \geq b)}$. Now consider the diagram
\begin{center}
\begin{tikzpicture}
\matrix(m)[matrix of math nodes,
row sep=3em, column sep=3em,
text height=1.5ex, text depth=0.25ex]
{ M & e_{(a,b)}B \\
 \overline{M}_{(\geq a, \geq b)} & \overline{e_{(a,b)}B} \\};
\path[->,font=\scriptsize]
(m-1-1) edge (m-1-2)
        edge (m-2-1)
(m-1-2) edge (m-2-2)
(m-2-1) edge (m-2-2);
\end{tikzpicture}
\end{center}
As the upper horizontal arrow and the right vertical arrow are injective, so is the left vertical arrow. We obtain a short exact sequence
\begin{equation} \label{eq:fingenSES} 0 \rightarrow M \rightarrow \overline{M}_{(\geq a, \geq b)} \rightarrow \left(\overline{M}/M \right)_{(\geq a, \geq b)} \rightarrow 0 \end{equation}
By \Cref{lem:twopairfunctors}(4) we have that $\overline{M}/M$ and hence also $\left(\overline{M}/M \right)_{(\geq a, \geq b)}$ is torsion. Being a quotient of a finitely generated module, $\left(\overline{M}/M \right)_{(\geq a, \geq b)}$ is also finitely generated and hence concentrated in finitely many degrees. In particular there exist $c,d \in \mathbb{Z}$ such that $\left(\left(\overline{M}/M \right)_{(\geq a, \geq b)}\right)_{(i,j)}=0$ for $i > c, j > d$. As such \eqref{eq:fingenSES} satisfies the assumptions in \Cref{lem:fingenSES}, implying $M$ is finitely generated.
\end{proof}

\section{Proof of \Cref{thm:commonblowup}}
\label{sec:asblowups}
In this section we give the proof of \Cref{thm:commonblowup}. For this we fix the notations $A, A', \gamma, \delta, Y, \Lscr, \sigma, p, p', q', \tilde{A}$ and $\tilde{B}$ as in \S \ref{sec:summarycon}. As a result of \Cref{thm:Atildenoeth} we can apply \Cref{thm:equivalencediagonal} to the $\mathbb{Z}^2$-algebra $\tilde{A}$. Hence for each diagonal-like $\Delta: \mathbb{Z} \rightarrow \mathbb{Z}^2$ there is an equivalence of categories $\QGr(\tilde{A}) \cong \QGr(\tilde{A}_\Delta)$. Throughout this section we focus on a specific choice of $\Delta$: $\Delta(i)=(2i,i)$. For this choice of $\Delta$ we have the following:
\[ \left( \tilde{A}_\Delta \right)_{i,j} = \tilde{A}_{(2i,i),(2j,j)} = \begin{cases} \Hom_X(\Oscr_X(-4j),\Oscr_X(-4i)m_{\tau^{-i}p}\ldots m_{\tau^{-j+1}p} & \textrm{ if $j \geq i$} \\ 0 & \textrm{ else } \end{cases} \]

In particular there is an obvious inclusion 
\begin{equation} \label{eq:TinA4}
\tilde{A}_\Delta \hookrightarrow \widecheck{(A^{(4)})}
\end{equation} 
Moreover $\widecheck{(A^{(4)})}$ is 1-periodic as it is induced by the graded algebra $A^{(4)}$. The equality
\[ o_X(4)m_d = m_{\tau d}o_X(4) \]
implies that $\tilde{A}_\Delta$ is compatible with this 1-periodicity. I.e. there is a graded algebra $T$ such that $\tilde{A}_\Delta = \check{T}$ and \eqref{eq:TinA4} is induced by an inclusion
\[ T \hookrightarrow A^{(4)} \]
By construction $T$ is the subalgebra of $A^{(4)}$ generated by the elements in $A^{(4)}_1= A_4$ whose images in $A^{(4)}/g$ lie in $\Gamma(Y,\Lscr \sigma^*\Lscr \sigma^{*2}\Lscr \sigma^{*3}\Lscr (-p))$. This is exactly the definition of the noncommutative blow-up $A^{(4)}(p)$ as in \cite{RSS2}.\\ \\

Next we need to show that $\tilde{A}_\Delta = \check{T}$ can also be seen as a noncommutative blow-up of $A'$. I.e. we need to show the existence of a divisor $d'$ such that $T=A^{\prime (3)}(d')$. We claim that $d'=p'+q'$ with $p'$ and $q'$ as in \eqref{eq:defp'q'} will do the job. First note that 
\begin{equation} \label{eq:TinA'3} \delta^{-2i} (-) \delta^{2j}: (\tilde{A}_\Delta)_{i,j} = \tilde{A}_{(2i,i),(2j,j)} \rightarrow \tilde{A}_{(0,3i),(0,3j)} \cong A'_{3i,3j} \end{equation}
defines an inclusion $\check{T} = \tilde{A}_\Delta \hookrightarrow \widecheck{A^{\prime (3)}}$. We need to show that (up to replacing the $\delta_{(i,j)}$ by some scalar multiples) \eqref{eq:TinA'3} is compatible with the 1-periodicity of $\widecheck{A^{\prime (3)}}$ and $\check{T}$ such that there is an induced inclusion $T \hookrightarrow A^{\prime (3)}$. To this we need to show that the periodicity isomorphisms $\tilde{A}_{(0,3i),(0,3j)} \cong \tilde{A}_{(0,3i+3),(0,3j+3)}$ and $\tilde{A}_{(2i,i),(2j,j)} \cong \tilde{A}_{(2i+2,i+1),(2j+2,j+1)}$, induced by $A^{\prime (3)}$ and $T$ respectively, are compatible in the sense that the following diagram commutes

\begin{equation} \label{eq:periodicitycommute}
\begin{tikzpicture}
\matrix(m)[matrix of math nodes,
row sep=3em, column sep=6em,
text height=1.5ex, text depth=0.25ex]
{ \tilde{A}_{(2i,i),(2j,j)} & \tilde{A}_{(0,3i),(0,3j)} \\
\tilde{A}_{(2i+2,i+1),(2j+2,j+1)} & \tilde{A}_{(0,3i+3),(0,3j+3)} \\};
\path[->,font=\scriptsize]
(m-1-1) edge node[above]{$\delta^{-2i} (-) \delta^{2j}$}(m-1-2)
        edge node[left]{$\cong$}(m-2-1)
(m-1-2) edge node[right]{$\cong$}(m-2-2)
(m-2-1) edge node[below]{$\delta^{-2i-2} (-) \delta^{2j+2}$}(m-2-2);
\end{tikzpicture}
\end{equation}
As all morphisms in the above diagram are compatible with the algebra structure of $\tilde{A}$ and as $T$ and $A'$ are generated in degree 1, it suffices to check that the above diagram commutes for $j=i+1$.

Recall that there is a central element $g' \in A'_3$ such that $A'/g'A' \cong B'$ where $B'$ is the twisted homogeneous coordinate ring of $(Y, \Lscr \otimes \sigma^* \Lscr (-p), \psi)$. Let $g'_j$ be the corresponding element in $\check{A'}_{j,j+3}$. Then for each $i$ there is an exact sequence
\begin{equation} \label{eq:decomA'}
0 \rightarrow k g'_{3i} \rightarrow \tilde{A}_{(0,3i), (0,3i+3)} \rightarrow \tilde{B}_{(0,3i), (0,3i+3)} \rightarrow 0
\end{equation}
Similarly by \Cref{lem:BisAgA} for each $i$ there is an element $g_{(2i,i)} \in \tilde{A}_{(2i,i),(2i+2,i+1)}$ together with an exact sequence
\begin{equation} \label{eq:decomADelta}
0 \rightarrow k g_{(2i,i)} \rightarrow \tilde{A}_{(2i,i), (2i+2,i+1)} \rightarrow \tilde{B}_{(2i,i), (2i+2,i+1)} \rightarrow 0
\end{equation}
Now if we let $\overline{\delta_{(i,j)}}$ be the image of $\delta_{(i,j)}$ under $\tilde{A}_{(i,j),(i-1,j+1)} \rightarrow \tilde{B}_{(i,j),(i-1,j+1)}$ then we see that \eqref{eq:TinA'3}, \eqref{eq:decomA'} and \eqref{eq:decomADelta} are compatible in the sense that there is a commutative diagram:
\begin{center}
\begin{tikzpicture}
\matrix(m)[matrix of math nodes,
row sep=3em, column sep=3em,
text height=1.5ex, text depth=0.25ex]
{ 0 & k g'_{3i} & \tilde{A}_{(0,3i), (0,3i+3)} & \tilde{B}_{(0,3i), (0,3i+3)} & 0\\
0 & k g_{(2i,i)} & \tilde{A}_{(2i,i), (2i+2,i+1)} & \tilde{B}_{(2i,i), (2i+2,i+1)} & 0 \\};
\path[->,font=\scriptsize]
(m-1-1) edge (m-1-2)
(m-1-2) edge (m-1-3)
        edge node[left]{$\delta^{-2i} (-) \delta^{2j}$}(m-2-2)
(m-1-3) edge (m-1-4)
        edge node[left]{$\delta^{-2i} (-) \delta^{2j}$}(m-2-3)
(m-1-4) edge (m-1-5)        
        edge node[left]{$\overline{\delta}^{-2i} (-) \overline{\delta}^{2j}$}(m-2-4)
(m-2-4) edge (m-2-5)
(m-2-3) edge (m-2-4)
(m-2-2) edge (m-2-3)
(m-2-1) edge (m-2-2);
\end{tikzpicture}
\end{center}
Hence in order to prove commutativity of \eqref{eq:periodicitycommute} it suffices to prove commutativity of the following two diagrams:
\begin{equation} \label{eq:periodicitycommuteB}
\begin{tikzpicture}
\matrix(m)[matrix of math nodes,
row sep=3em, column sep=6em,
text height=1.5ex, text depth=0.25ex]
{ \tilde{B}_{(2i,i),(2i+2,i+1)} & \tilde{B}_{(0,3i),(0,3i+3)} \\
\tilde{B}_{(2i+2,i+1),(2j+2,j+1)} & \tilde{B}_{(0,3i+3),(0,3i+6)} \\};
\path[->,font=\scriptsize]
(m-1-1) edge node[above]{$\overline{\delta}^{-2i} (-) \overline{\delta}^{2i+2}$}(m-1-2)
        edge node[left]{$\cong$}(m-2-1)
(m-1-2) edge node[right]{$\cong$}(m-2-2)
(m-2-1) edge node[below]{$\overline{\delta}^{-2i-2} (-) \overline{\delta}^{2i+4}$}(m-2-2);
\end{tikzpicture}
\end{equation}
and
\begin{equation}
\label{eq:periodicitycommuteg}
\begin{tikzpicture}
\matrix(m)[matrix of math nodes,
row sep=3em, column sep=6em,
text height=1.5ex, text depth=0.25ex]
{ kg_{(2i,i)} & kg'_{3i} \\
kg_{(2i+2,i+1)} & kg'_{3i+3} \\};
\path[->,font=\scriptsize]
(m-1-1) edge node[above]{$\delta^{-2i} (-) \delta^{2i+2}$}(m-1-2)
        edge node[left]{$\cong$}(m-2-1)
(m-1-2) edge node[right]{$\cong$}(m-2-2)
(m-2-1) edge node[below]{$\delta^{-2i-2} (-) \delta^{2i+4}$}(m-2-2);
\end{tikzpicture}
\end{equation}
We first focus on \eqref{eq:periodicitycommuteB}. The left vertical arrow is given by
\[ \tau^*: \Gamma(Y,\sigma^{*4i}\Lscr \sigma^{*(4i+1)}\Lscr \ldots  \sigma^{*(4i+3)}\Lscr  ( -\tau^{-i}p)) \rightarrow \Gamma(Y,\sigma^{*4i+4}\Lscr  \ldots \sigma^{*(4i+7)}\Lscr ( -\tau^{-i-1}p)) \]
A closer investigation of the 1-periodicity of $\widecheck{B^{\prime (3)}}$ shows that the right vertical arrow in \eqref{eq:periodicitycommuteB} factors as
\begin{equation} \label{eq:factorphi} \tilde{B}_{(0,3i),(0,3i+3)} \overset{\tau^*} \longrightarrow \tilde{B}_{(2,3i+1),(2,3i+4)} \overset{\varphi} \longrightarrow \tilde{B}_{(0,3i+3),(0,3i+6)} \end{equation}
where $\varphi$ is given by multiplication by a nonzero section of
\[ \left(\sigma^{*(6i+4)}\Lscr\sigma^{*(6i+5)}\Lscr (-\tau^{-3i-1}p-\tau^{-3i-2}p) \right)^{-1} \left( \sigma^{*(6i+11)}\Lscr\sigma^{*(6i+12)}\Lscr (-\tau^{-3i-4}p-\tau^{-3i-5}p) \right) \]
As $\overline{\delta_{(m,n)}}$ is a nonzero section of $\sigma^{*(m+2n)}\Lscr(-\tau^{-j}p)$, we can (after inductively changing the $\delta_{(1,a)}$ by a scalar multiple) assume that the morphism
\[ \left(\overline{\delta_{(1,3i+2)}}\right)^{-1}\left( \overline{\delta_{(2,3i+1)}}\right)^{-1} (-) \overline{\delta_{(2,3i+4)}}\ \overline{\delta_{(1,3i+5)}}: \tilde{B}_{(2,3i+1),(2,3i+4)} \rightarrow \tilde{B}_{(0,3i+3),(0,3i+6)}\]
coincides with $\varphi$ for all $i$. Next note that $\tau^*$ maps $k \overline{\delta_{(m,n)}} = \tilde{B}_{(m,n),(m-1,n+1)}$ to $ \tilde{B}_{(m+2,n+1),(m+1,n+2)} = k \overline{\delta_{(m+2,n+1)}}$. Hence (after changing the $\delta_{(1+2b,a+b)}$ and $\delta_{(2+2b,a+b)}$ by a scalar multiple by induction on $b$) we can assume 
\begin{equation} \label{eq:taudelta} \tau^* \overline{\delta_{(m,n)}}=\overline{\delta_{(m+2,n+1)}}
\end{equation}
In particular \eqref{eq:periodicitycommuteB} commutes as it factors as follows:
\begin{center}
\begin{tikzpicture}
\matrix(m)[matrix of math nodes,
row sep=3em, column sep=5em,
text height=1.5ex, text depth=0.25ex]
{ \tilde{B}_{(2i,i),(2i+2,i+1)} & \tilde{B}_{(0,3i),(0,3i+3)}& \\
\tilde{B}_{(2i+2,i+1),(2j+2,j+1)} & \tilde{B}_{(2,3i+1),(2,3i+4)} &\tilde{B}_{(0,3i+3),(0,3i+6)} \\};
\path[->,font=\scriptsize]
(m-1-1) edge node[above]{$\overline{\delta}^{-2i} (-) \overline{\delta}^{2i+2}$}(m-1-2)
        edge node[left]{$\cong$} node[right]{$\tau^*$}(m-2-1)
(m-1-2) edge node[left]{$\tau^*$}(m-2-2)
        edge node[above]{$\cong$}(m-2-3)
(m-2-1) edge node[below]{$\overline{\delta}^{-2i} (-) \overline{\delta}^{2i+2}$}(m-2-2)
(m-2-2) edge node[below]{$\overline{\delta}^{-2} (-) \overline{\delta}^{2} = \varphi$}(m-2-3);
\end{tikzpicture}
\end{center}
Next we focus on commutativity of \eqref{eq:periodicitycommuteg}. The right vertical arrow in \eqref{eq:periodicitycommuteg} can be described as follows: write $g'_{3i}$ as a product of elements in $\check{A'}_{3i,3i+1}=\check{B'}_{3i,3i+1}$, $\check{A'}_{3i+1,3i+2}=\check{B'}_{3i+1,3i+2}$ and $\check{A'}_{3i+2,3i+3}=\check{B'}_{3i+2,3i+3}$, apply $\varphi \circ \tau^*: \check{B'}_{(m,m+1)} \rightarrow \check{B'}_{(m+3,m+4)}$ on each of the 3 factors, then $g'_{3i+3}$ is the product of these 3 new elements. This remark together with \eqref{eq:taudelta} shows that the commutativity of \eqref{eq:periodicitycommuteg} reduces to the following claim:\\
if $g_{(2i,i)} \delta_{(2i,i)}\delta_{(2i-1,i+1)} = a_0 \cdot a_1 \cdot a_2$ for $a_n \in \tilde{A}_{(2i,i+n),(2i,i+n+1)} = \tilde{B}_{(2i,i+n),(2i,i+n+1)}$ then $g_{(2i+2,i+1)} \delta_{(2i+2,i+1)}\delta_{(2i+1,i)} = \tau^*a_0 \cdot \tau^*a_1 \cdot \tau^*a_2$.\\
As each there are embeddings
\begin{equation} \label{eq:embeddingAinA}
\tilde{A}_{(i,j),(m,n)} \hookrightarrow \check{A}_{i+2j,m+2n}
\end{equation}
 it suffices to check the claim in $\check{A}$. For this denote $\underline{x}$ for the image of some $x \in \tilde{A}$ under the embedding \eqref{eq:embeddingAinA}. Consider the equality
\begin{equation} \label{eq:underlineeq} \underline{g_{(2i,i)}} \cdot  \underline{\delta_{(2i,i)}} \cdot \underline{\delta_{(2i-1,i+1)}} = \underline{a_0} \cdot \underline{a_1} \cdot \underline{a_2} \end{equation}
As $\underline{\delta_{(2i,i)}}, \underline{\delta_{(2i-1,i+1)}} ,\underline{a_0}, \underline{a_1} $ and $ \underline{a_2}$ lie in $\check{B}$ the 4-periodicity morphism $\check{A}_{m,n} \rightarrow \check{A}_{m+4,n+4}$ sends them to $\tau^* \underline{\delta_{(2i,i)}}, \tau^* \underline{\delta_{(2i-1,i+1)}} ,\tau^* \underline{a_0} = \underline{\tau^* a_0}, \tau^* \underline{a_1} = \underline{\tau^* a_1}$ and $ \tau^* \underline{a_2} = \underline{\tau^* a_2}$. Moreover by \eqref{eq:taudelta}
\[ \tau^* \underline{\delta_{(i,j)}} = \underline{ \tau^* \overline{\delta_{(i,j)}}} = \underline{\overline{\delta_{(i+2,j+1)}}} = \underline{\delta_{(i+2,j+1)}} \]
Finally notice that by construction $\underline{g_{(2i,i)}}$ is the element in $\check{A}_{4i,4i+4}$ corresponding to the central element $g \in A_4$, as such $\check{A}_{4i,4i+4} \rightarrow \check{A}_{4i+4,4i+8}$ sends it to $\underline{g_{(2i+2,i+1)}}$. In particular the 4-periodicity of $\check{A}$ turns the equality \eqref{eq:underlineeq} into
\[ \underline{g_{(2i+2,i+1)}} \cdot  \underline{\delta_{(2i+2,i+1)}} \cdot \underline{\delta_{(2i+1,i)}} = \underline{\tau^*a_0} \cdot \underline{\tau^*a_1} \cdot \underline{\tau^*a_2}\]
proving our claim and hence showing commutativity of \eqref{eq:periodicitycommuteg} and hence of \eqref{eq:periodicitycommute}. As mentioned above this implies that the inclusion \eqref{eq:TinA'3} induces an inclusion
\[ T \hookrightarrow A^{\prime (3)} \]
Our goal is to understand the image of this inclusion. As $T$ is generated in degree 1, it suffices to understand the image of 
\[ T_1 \cong \tilde{A}_{(0,0),(2,1)} \overset{ \cdot \delta^2}{\longrightarrow} \tilde{A}_{(0,0),(0,3)} \cong A'_3 \]
As was mentioned above this image contains $g'$ and the image of $T_1 \rightarrow A'_3 \rightarrow A'_3/g'$ is the same as the image of
\[ \cdot \overline{\delta_{(2,1)}} \overline{\delta_{(1,2)}}: \tilde{B}_{(0,0),(2,1)} \rightarrow \tilde{B}_{(0,0),(0,3)} \cong B'_3 \]
Following the computations in \cite[\S 8.2]{Presotto} $\overline{\delta_{(2,1)}} \overline{\delta_{(1,2)}}$ is a nonzero global section of
\[ \sigma^{*(-1)}\Lscr \sigma^{*(-2)}\Lscr(-\tau^{-1}p-\tau^{-2}p-q'-p') \]
where $p', q'$ are as in \eqref{eq:defp'q'}.\\
Hence $T$ is the subalgebra of $A^{\prime(3)}$ generated by the elements of $A'_3$ whose image in
\[ B'_3 = \Gamma(Y, \Lscr \sigma^{*(-1)}\Lscr \sigma^{*(-2)}\Lscr(-p-\tau^{-1}p-\tau^{-2}p)) \]
lies in
\[ B'_3 = \Gamma(Y, \Lscr \sigma^{*(-1)}\Lscr \sigma^{*(-2)}\Lscr(-p-\tau^{-1}p-\tau^{-2}p-q'-p')) \]
Hence $T$ is isomorphic to the noncommutative blow up (\cite{RSS2}) $A^{\prime (3)}(p'+q')$ as desired. This finishes the proof of \Cref{thm:commonblowup}.

\bibliographystyle{amsplain}
\bibliography{Notereference}

\providecommand{\bysame}{\leavevmode\hbox to3em{\hrulefill}\thinspace}
\providecommand{\MR}{\relax\ifhmode\unskip\space\fi MR }
\providecommand{\MRhref}[2]{%
  \href{http://www.ams.org/mathscinet-getitem?mr=#1}{#2}
}
\providecommand{\href}[2]{#2}
\begin{thebibliography}{10}

\bibitem{artinschelter}
M.~Artin and W.F. Schelter, \emph{Graded algebras of global dimension 3},
  Adv.Math \textbf{66} (1987), 171--216.

\bibitem{ATV1}
M.~Artin, J.~Tate, and M.~{Van den Bergh}, \emph{Some algebras associated to
  automorphisms of elliptic curves}, The Grothendieck Festschrift (P.~et~al.
  Cartier, ed.), Modern Birkhäuser Classics, vol.~1, Birkhäuser Boston, 1990,
  pp.~33--85.

\bibitem{ATV2}
\bysame, \emph{Modules over regular algebras of dimension 3}, Inventiones
  mathematicae \textbf{106} (1991), no.~1, 335--388.

\bibitem{AV}
M.~Artin and M.~Van~den Bergh, \emph{Twisted homogeneous coordinate rings},
  Journal of Algebra \textbf{133} (1990), no.~2, 249--271.

\bibitem{artinzhang}
M.~Artin and J.J. Zhang, \emph{Noncommutative projective schemes}, Advances in
  Mathematics \textbf{109} (1994), no.~2, 228 -- 287.

\bibitem{chan}
D.~Chan, \emph{Twisted multi-homogeneous coordinate rings}, Journal of Algebra
  \textbf{223} (2000), no.~2, 438 -- 456.

\bibitem{CHTV}
A.~Conca, J.~Herzog, N.V. Trung, and G.~Valla, \emph{Diagonal subalgebras of
  bigraded algebras and embeddings of blow-ups of projective spaces}.

\bibitem{hartshorne}
R.~Hartshorne, \emph{Algebraic geometry}, 8 ed., Graduate Texts in Mathematics,
  Springer-Verslag, 1997.

\bibitem{KSSW}
K.~Kurano, E.~Sato, Anurag K., A.K. Singh, and K.~Watanabe, \emph{Multigraded
  rings, diagonal subalgebras, and rational singularities}, Journal of Algebra
  \textbf{322} (2009), no.~9, 3248 -- 3267.

\bibitem{LowenRS}
W.~Lowen, J.~Ramos~Gonz\'alez, and B.~Shoikhet, \emph{On the tensor product of
  linear sites and grothendieck categories}, arXiv:1607.03608, 2016.

\bibitem{Polishchuk}
A.~Polishchuk, \emph{Noncommutative proj and coherent algebras}, Mathematical
  Research Letters \textbf{12} (2005), no.~1, 63--74.

\bibitem{Presotto}
D.~Presotto, \emph{Symmetric noncommutative birational transformations},
  arXiv:1607.08383, 2016.

\bibitem{PresVdB}
D.~Presotto and M.~Van~den Bergh, \emph{Noncommutative versions of some
  classical birational transformations}, Journal of Noncommutative Geometry
  \textbf{10} (2016), no.~1, 221--244.

\bibitem{RSS2}
D.~Rogalski, S.J. Sierra, and J.T. Stafford, \emph{Noncommutative blowups of
  elliptic algebras}, Algebras and Representation Theory (2014), 1--39.

\bibitem{RSS3}
\bysame, \emph{Ring-theoretic blowing down: I}, arXiv:1603.08128, 2016.

\bibitem{Sierra}
S.~J. Sierra, \emph{{$G$}-algebras, twistings, and equivalences of graded
  categories}, Algebras and Representation Theory \textbf{14} (2011), no.~2,
  377--390.

\bibitem{SimisTV}
A.~Simis, N.V. Trung, and G.~Valla, \emph{The diagonal subalgebra of a blow-up
  algebra}, Journal of Pure and Applied Algebra.

\bibitem{trung}
N.V. Trung, \emph{Diagonal subalgebras and blow-ups of projective spaces},
  Vietnam Journal of Mathematics (2000), no.~28, 1--15.

\bibitem{VdB19}
M.~Van~den Bergh, \emph{Blowing up non-commutative smooth surfaces}, Mem. Amer.
  Math. Soc. \textbf{154} (2001), no.~734.

\end{thebibliography}

\end{document}